\documentclass[reqno]{amsart}

\usepackage[numbers]{natbib}
\usepackage{amssymb}
\usepackage{float}
\usepackage{caption}
\usepackage{footnote}
\usepackage{tikz, tikz-cd}
\usetikzlibrary{matrix, arrows}
\usetikzlibrary{patterns}
\usepackage{wrapfig}
\usepackage{hyperref}
\usepackage{array}
\hypersetup{colorlinks,linkcolor={blue},citecolor={blue},urlcolor={blue}}
\usepackage[cal=boondox]{mathalfa}
\usepackage{graphicx}
\usepackage[abs]{overpic}
\graphicspath{ {images/} }

\pagestyle{plain}

\setlength\arraycolsep{4pt}
\setlength{\bibsep}{6.0pt}
\setlength\belowcaptionskip{-0.5\baselineskip}
\setlength\abovecaptionskip{0.4\baselineskip}

\newtheorem{theorem}{Theorem}[section]

\newtheorem{lemma}[theorem]{Lemma}
\newtheorem{proposition}[theorem]{Proposition}
\newtheorem{corollary}[theorem]{Corollary}

\theoremstyle{definition}
\newtheorem{definition}[theorem]{Definition}
\newtheorem{notation}[theorem]{Notation}

\numberwithin{equation}{section}

\newcommand{\bR}{\mathbb{R}}
\newcommand{\bQ}{\mathbb{Q}}
\newcommand{\bZ}{\mathbb{Z}}
\newcommand{\bN}{\mathbb{N}}
\newcommand{\bC}{\mathbb{C}}
\newcommand{\bH}{\mathbb{H}}
\newcommand{\cO}{\mathcal{O}}

\newcommand{\cj}{\mathcal{j}}
\newcommand{\cJ}{\mathcal{J}}
\newcommand{\cB}{\mathcal{B}}
\newcommand{\cS}{\mathcal{S}}

\newcommand{\fp}{\mathfrak{p}}
\newcommand{\fq}{\mathfrak{q}}
\newcommand{\fa}{\mathfrak{a}}

\newcommand{\diam}{\raisebox{0.6pt}[0pt][0pt]{$\Diamond$}}

\begin{document}

\title[Bianchi group generators]{Bounds on entries in Bianchi group generators}

\thanks{The author is grateful to John Cremona and Alexander Rahm for their time, expertise, and data, which were produced with their code. The author also thanks the anonymous referee for corrections and very helpful suggestions.}

\author{Daniel E. Martin}
\address{University of California, Davis, CA, United States}
\email{dmartin@math.ucdavis.edu}

\subjclass[2010]{11F06, 11Y40, 20H05, 20H10, 11R11, 11Y16, 57K32.}

\keywords{Bianchi group, imaginary quadratic field, Swan's algorithm.}

\date{\today}

\begin{abstract}Upper and lower bounds are given for the maximum Euclidean curvature among faces in Bianchi's fundamental polyhedron for $\text{PSL}_2(\cO)$ in the upper-half space model of hyperbolic space, where $\cO$ is an imaginary quadratic ring of integers with discriminant $\Delta$. We prove these bounds are asymptotically within $(\log |\Delta|)^{8.54}$ of one another. This improves on the previous best upper-bound, which is roughly off by a factor between $\Delta^2$ and $|\Delta|^{5/2}$ depending on the smallest prime dividing $\Delta$. The gap between our upper and lower bounds is determined by an analog of Jacobsthal's function, introduced here for imaginary quadratic fields.\end{abstract}

\maketitle

\section{Introduction}\label{sec:1}

Bianchi groups are of the form $\text{PSL}_2(\cO)$, where $\cO$ is an imaginary quadratic ring of integers. They are exceptional in that $\text{PSL}_n(\cO)$ is generated by elementary matrices whenever $n>2$ \cite{bass} or $\cO$ is any ring of integers other than non-Euclidean, imaginary quadratic \cite{cohn} \cite{vaser}. But for the Bianchi groups, no general, explicit description of a generating set is known. Instead presentations are computed algorithmically.

The theory behind many such algorithms \cite{aranes} \cite{cremona} \cite{floge} \cite{rahm} \cite{riley} \cite{swan} \cite{vogtmann} \cite{yasaki} is rooted in the action of $\text{PSL}_2(\cO)$ on hyperbolic 3-space. We use the upper-half space model, $\bH=\{(\zeta,t)\,|\,\zeta\in\bC,t\in(0,\infty)\}$, equipped with the hyperbolic metric. The Bianchi group action on $\bH$ admits a fundamental domain---a closed subset whose orbit tessellates $\bH$. (See Sections 1.1, 2.2, and 7.3 of \cite{elstrodt} for details and background.) There is one such domain in particular that interests us.

\begin{definition}\label{def:bianchi}Let $\cO$ be the integers in an imaginary quadratic field $K$ of discriminant $\Delta$, and let $\Gamma=\text{PSL}_2(\cO)$. Given a fundamental parallelogram $F\subset\bC$ for $\cO$, the \emph{Bianchi polyhedron} (also called a \emph{Ford domain}) is $$\cB=\{P\in\bH\,|\,\zeta(P)\in \overline{F},\,t(P)\geq t(gP)\text{ for any }g\in\Gamma\},$$ where $\zeta(P)$ and $t(P)$ denote the first and second coordinates of $P$.\end{definition}

Bianchi first computed these polyhedra for 11 different discriminants \cite{bianchi}. 

The second coordinate from the action of $g\in\Gamma$ on $P\in \bH$ is \begin{equation}\label{eq:1}t(gP)=\frac{t(P)}{|\mu\zeta(P)-\lambda|^2+|\mu|^2t(P)^2},\hspace{0.5cm}\text{where}\hspace{0.5cm}g=\begin{bmatrix}\beta & -\alpha\\ -\mu & \lambda\end{bmatrix}.\end{equation} 
From (\ref{eq:1}) we see that $t(gP)=t(P)$ exactly when $|\mu\zeta(P)-\lambda|^2+|\mu|^2t(P)^2=1$. This equation holds for all $P$ if $\mu=0$ since $|\lambda|=1$ is forced by $(\lambda,\mu)=\cO$. If $\mu\neq 0$, it defines the following surface.

\begin{notation}For coprime $\lambda,\mu\in\cO$ with $\mu\neq 0$, let $S_{\lambda/\mu}\subset\bH$ denote the open Euclidean hemisphere (or hyperbolic plane) with center $(\lambda/\mu,0)\in\partial\hspace{0.03cm}\bH$ and radius $1/|\mu|$. Given $g\in\Gamma$ as in (\ref{eq:1}), $S_{\lambda/\mu}$ is also written $S_g$.\end{notation}

\begin{wrapfigure}{r}{0.41\textwidth}
\vspace{-0.29cm}
    \hspace{0.24cm}\includegraphics[trim=2cm 0cm 2cm 0cm,clip,height=5.8cm]{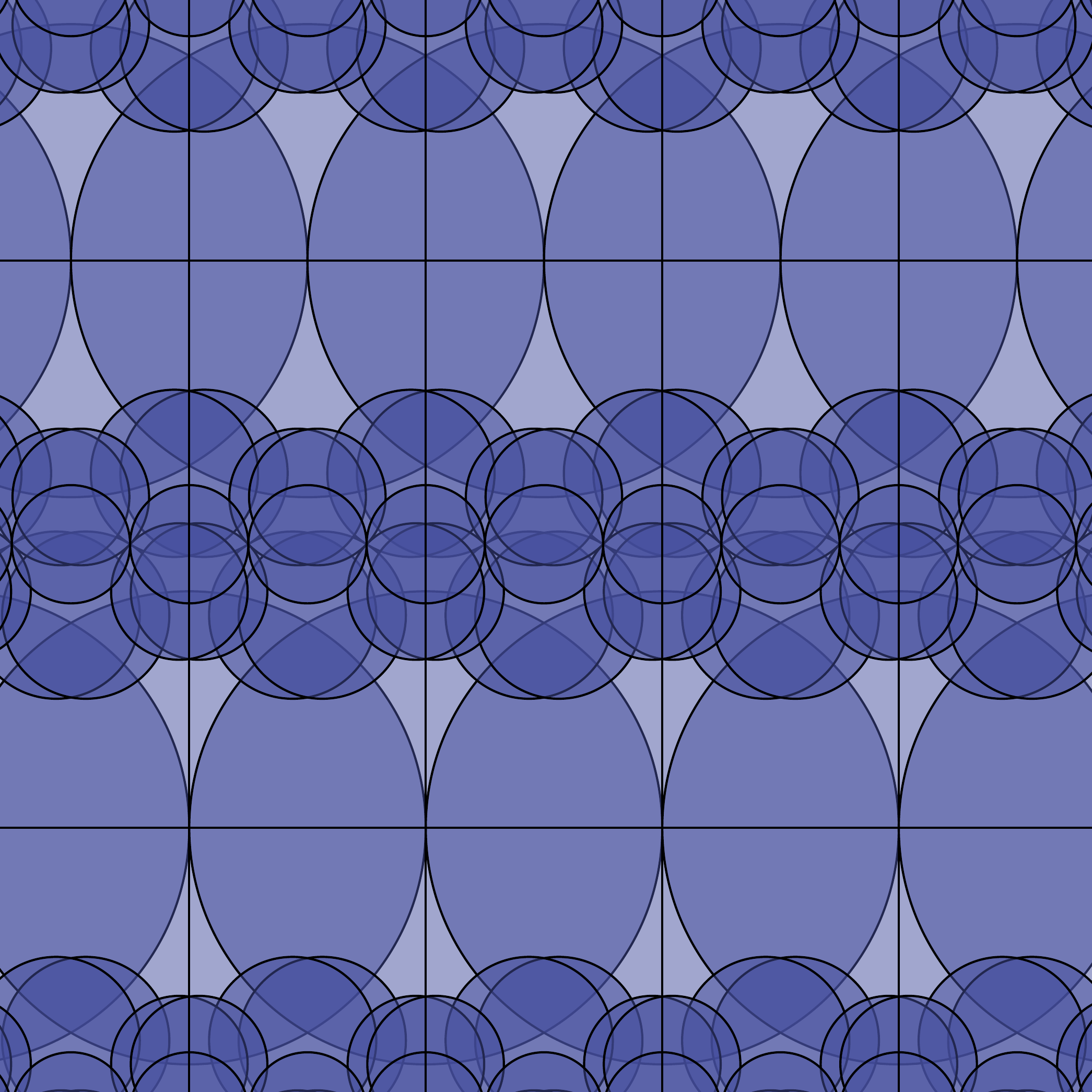}
    \captionsetup{width=0.375\textwidth}
    \setlength\abovecaptionskip{0.2cm}
    \setlength\belowcaptionskip{-0.5cm}
    \caption{Projection of $\cB$ for $\bQ(\sqrt{-23})$ tiled over $\bC$.}\label{fig:1}
\end{wrapfigure}

From (\ref{eq:1}), a point $P$ is below the hemisphere $S_g$ if and only if $t(gP)>t(P)$. So modulo the action of upper-triangular matrices, which translate $\zeta$ and fix $t$, each orbit $\Gamma P$ has a unique element that lies above every $S_{g}$---the point with maximal $t$ coordinate. Thus the assertion that $\cB$ is a fundamental domain. 

For $\Delta=-23$, Figure \ref{fig:1} shows projections of hemispheres that are large enough to be seen from above among the full set $\{S_g\}_g$. These compose the ``floor" of $\cB$. The large discs are unit discs on integers, coming from $\mu=\pm 1$. Note the choice of $F$ in Definition \ref{def:bianchi} has no effect on the list of radii of faces in $\cB$. 

The largest radius reciprocal, or curvature, in Figure \ref{fig:1} is $4$. We aim to bound this maximum from above and below for general $\Gamma$.

\begin{definition}\label{def:swan}The maximum value of $|\mu|$ for which $S_{\lambda/\mu}\cap\cB$ contains an open subset of $S_{\lambda/\mu}$ for some $\lambda$ coprime to $\mu$ is called \emph{Swan's number}, denoted $\cS(\Gamma)$.\end{definition}

A good bound on $\cS(\Gamma)$ would follow from Dirichlet's box principle if hemispheres of center $\lambda/\mu$ and radius $1/|\mu|$ were permitted for any $\lambda,\mu\in\cO$. Swan's number is only made elusive by requiring $(\lambda,\mu)=\cO$.

Poincar\'{e}'s polyhedron theorem asserts that those $g\in\Gamma$ for which $S_g$ is a face of $\cB$ form a generating set in combination with upper triangular matrices \cite{poincare}. (The theorem also gives a method for computing relations among these generators.) In particular, $\cS(\Gamma)$ indirectly bounds all matrix entries needed to generate $\Gamma$: The bottom-left entry in (\ref{eq:1}) is bounded explicitly in Definition \ref{def:swan}. Then $\beta$ and $\lambda$ in (\ref{eq:1}) can be chosen with minimal magnitude from their respective congruence classes $\text{mod}\,\mu$ via left and right multiplication by appropriate upper-triangular matrices. This, in turn, bounds the top-right entry, $\alpha=(1-\beta\lambda)/\mu$. 

From the development of Poincar\'{e}'s work by Bianchi \cite{bianchi}, Humbert \cite{humbert}, and Macbeath \cite{macbeath}, Swan designed an algorithm that computes $\cB$ and from it a presentation of $\Gamma$ \cite{swan}. Implementations of Swan's algorithm use an upper bound on $\cS(\Gamma)$ to create a superset of $\cB$'s faces, then eliminate those $S_g$ that are not needed.

To the author's knowledge, the only explicit, general upper bounds for $\cS(\Gamma)$ are due to Swan and Yao. Swan never actually states his bound in \cite{swan}, saying that it is much too large for practical use. Yao asserts that Swan's bound is roughly $|\Delta|^{4.5}$, then proves in his Theorem A that $\cS(\Gamma)<C|\Delta|^{3.5}$ for some absolute constant $C$ \cite{yao}. Aran\'{e}s claims in Remark 3.2.24 of \cite{aranes} that Swan's arguments give an upper bound with leading term $|\Delta|^3/6\sqrt{3}$. In any case, we are left with nothing of practical use and little insight into the asymptotic behavior of $\cS(\Gamma)$. As such, Swan prescribes guessing $\cB$ and computing the minimal second coordinate, $t$, among vertices in the guess. Then $\cS(\Gamma) < 1/t$ since hemispheres with larger curvature are not tall enough to poke through the floor of $\cB$. 

In correspondence, Rahm points out that $1/t$ works well in cases where computing $\cB$ is currently feasible. His implementation of Swan's algorithm (Algorithm 1 in \cite{rahm}) initially guesses that $\cS(\Gamma)$ is a multiple of $\sqrt{h|\Delta|}$, where $h$ is the class number. For small discriminants, Rahm's estimate is much closer to $\cS(\Gamma)$ than the upper bound produced here. But the forthcoming theorem shows it can be an over- or under-approximation, depending on how $\Delta$ factors, by roughly $\sqrt[4]{|\Delta|}$ for large discriminants (using $\log h\sim\log\sqrt{|\Delta|}$ \cite{siegel}).

Our upper bound for $\cS(\Gamma)$ is expressed in terms of a Jacobsthal-type function. Given a modulus $n$, Jacobsthal asked what minimal interval length would guarantee an integer coprime to $n$ \cite{jacobsthal}. We define an analogous function, $\cJ$, for imaginary quadratic fields in Definition \ref{def:jacob} and Notation \ref{not:J}.

\begin{theorem}\label{thm:intro}If $\delta\in\cO$ has maximal magnitude among proper divisors of $\Delta$ and $J\in\bZ$ is such that $\cJ(2\max(|\delta|,J\!\sqrt{|\Delta|}))\leq J$, then $$\max\!\left(\frac{|\delta|}{8},\frac{\sqrt{|\Delta|}-2}{\sqrt{3}}\right)<\cS(\Gamma)<14J\max\!\left(|\delta|,J\sqrt{|\Delta|}\right).$$\end{theorem}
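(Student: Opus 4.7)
My plan splits along the maximum. Both the lower and upper bounds rest on a single covering criterion derivable from (\ref{eq:1}): the apex $(\lambda/\mu,1/|\mu|)$ of $S_{\lambda/\mu}$ lies weakly above $S_{\lambda'/\mu'}$ if and only if $|\lambda\mu'-\lambda'\mu|^2\geq |\mu|^2-|\mu'|^2$. Since $\lambda\mu'-\lambda'\mu\in\cO$ has integer norm, this discrete inequality is rigid and can be exploited from either side.

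For the lower bound I plan to exhibit two separate families of faces. For the $(\sqrt{|\Delta|}-2)/\sqrt{3}$ term, take three nearby lattice points $a,b,c\in\cO$ forming the ``equilateral-like'' triangle native to the $\cO$-lattice, whose circumradius is on the order of $\sqrt{|\Delta|}/(2\sqrt{3})$. For $|\Delta|$ at all large, the unit hemispheres $S_a,S_b,S_c$ fail to cover their common circumcenter $z_0\in\bC$, so the floor of $\cB$ above $z_0$ must be supplied by a face $S_{\lambda/\mu}$ with $\lambda/\mu$ near $z_0$. The covering criterion applied to its apex against each of $S_a,S_b,S_c$ turns the geometric circumradius into the inequality $|\mu|>(\sqrt{|\Delta|}-2)/\sqrt{3}$; the ``$-2$'' absorbs the deviation of the actual $\cO$-triangle from a perfectly equilateral one. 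For the $|\delta|/8$ piece, exploit $\delta\mid\Delta$: choose $\mu$ to be a small multiple of $\delta$ and build $\lambda$ coprime to $\mu$ for which every competitor $\mu'$ of smaller norm fails the covering inequality, because the arithmetic of $\cO/(\delta)$ forces $\lambda\mu'-\lambda'\mu$ to have norm above $|\mu|^2-|\mu'|^2$. The constant $1/8$ is a lossy safety margin absorbing constants incurred while handling small-discriminant boundary cases.

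For the upper bound, assume $|\mu|>14J\max(|\delta|,J\sqrt{|\Delta|})$ with $(\lambda,\mu)=\cO$, and show every point of $S_{\lambda/\mu}$ lies below some hemisphere $S_{\lambda'/\mu'}$ with $|\mu'|\leq \max(|\delta|,J\sqrt{|\Delta|})$ by a two-step approximation. In Step 1, for a point on $S_{\lambda/\mu}$ projecting to $\zeta\in\bC$, Dirichlet's box principle over $\cO$ produces a fraction $\lambda_0/\mu'$ with $|\mu'|\leq\max(|\delta|,J\sqrt{|\Delta|})$ and $|\zeta\mu'-\lambda_0|$ small; the $|\delta|$ alternative in the max enters because $\mu'$ must be chosen to avoid common factors with the ramified part of $\Delta$, and enlarging the candidate set by $|\delta|$ handles this obstruction. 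In Step 2, the hypothesis $\cJ(2\max(|\delta|,J\sqrt{|\Delta|}))\leq J$ guarantees a coprime shift $\lambda'\equiv\lambda_0\pmod{\mu'}$ with $|\lambda'-\lambda_0|\leq J$, and the slack baked into the constant $14$ is exactly what makes the covering inequality $|\lambda\mu'-\lambda'\mu|^2<|\mu|^2-|\mu'|^2$ survive this double perturbation for every $\zeta$ in the projected disc of $S_{\lambda/\mu}$.

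The chief obstacle will be making the upper-bound covering \emph{uniform} across all of $S_{\lambda/\mu}$ rather than merely at its apex: for any individual point the covering criterion is just a single inequality, but producing a finite family of competing hemispheres that jointly cover a whole $\zeta$-disc while simultaneously preserving coprimality of each $\lambda'$ with $\mu'$ is where the factor $14$ and the quadratic dependence $J^2\sqrt{|\Delta|}$ get locked in. A secondary technical point is verifying in the lower bound that the face posited above $z_0$ is not dominated by some other small-denominator competitor, which again demands a Jacobsthal-flavored non-existence argument to rule out nearby coprime $\lambda'/\mu'$.
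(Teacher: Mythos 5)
Your proposal reproduces the broad flavor of the argument (Diophantine approximation plus a Jacobsthal-type coprimality adjustment for the upper bound; exhibiting hard-to-approximate points for the lower bound), but both halves have gaps that the paper's machinery exists precisely to close.

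For the lower bound, the inference ``$z_0$ is not covered by unit hemispheres, so the floor of $\cB$ above $z_0$ must be supplied by a face $S_{\lambda/\mu}$ with large $|\mu|$'' is false as stated: $z_0$ could be (or be adjacent only to) a \emph{singular point}, a cusp at which the floor of $\cB$ descends to $\partial\bH$ and which lies in the closure of no hemisphere's interior at all. The paper's Definition \ref{def:sing} and Lemmas \ref{lem:discrete}--\ref{lem:sing} are exactly the device that repairs this: singular points form a discrete set, so near any uncovered region there is a \emph{nonsingular} $\zeta\in K$, and Lemma \ref{lem:sing} converts ``$\zeta$ nonsingular'' into ``some coprime $\lambda/\mu$ with $|\mu|\leq\cS(\Gamma)$ satisfies $|\mu\zeta-\lambda|<1$.'' Your proposal has no substitute for this step. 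Separately, your geometric input is off: the fundamental lattice triangle of $\cO$ is a tall isoceles triangle with circumradius $\approx\sqrt{|\Delta|}/4$, not an equilateral-like one of circumradius $\sqrt{|\Delta|}/(2\sqrt{3})$, and in any case distance to lattice points does not translate directly into a bound on the covering $|\mu|$. The paper instead fixes the single point $\zeta_6=e^{i\pi/3}$ and shows by a direct computation on imaginary parts (multiples of $\sqrt{|\Delta|}/2$) that $|\mu\zeta_6-\lambda|\leq 1$ forces $\lambda/\mu\in\{0,1\}$ whenever $|\mu|<(\sqrt{|\Delta|}-2)/\sqrt{3}$; the $\sqrt{3}$ is $2\,\Im(\zeta_6)$, not a circumradius. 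The $|\delta|/8$ bound likewise comes from a carefully constructed nonsingular $\zeta=ap/\pi$ for which every good approximation with $|\mu|<|\delta|/8$ lands in the ramified prime $\fp$, not from exhibiting a face directly.

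For the upper bound, the fatal problem is Step 2: adjusting only the numerator by $j$ with $|j|\leq J$ moves the center $\lambda'/\mu'=(\lambda_0+j)/\mu'$ by $|j|/|\mu'|$, i.e.\ by up to $J$ times the radius $1/|\mu'|$ of the candidate hemisphere, so the shifted hemisphere generally misses $\zeta$ entirely; no choice of the constant $14$ rescues this. This is why the paper forms $(j\lambda+\lambda')/(j\mu+\mu')$ from \emph{two} approximations, where the first pair $(\lambda,\mu)$ has small enough $|\mu|$ and $|\mu\zeta-\lambda|$ simultaneously that perturbing by $j(\lambda,\mu)$ for $|j|\leq J$ changes neither the denominator size nor the approximation quality too much (see the discussion opening Section \ref{sec:3} and Lemma \ref{lem:jacob}). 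Producing two such approximations with controlled common factors is the hard part and occupies the continued-fraction Lemma \ref{lem:cfrac}, the Thue-type Lemma \ref{lem:thue}, and the five-case analysis of Theorem \ref{thm:up} (with the especially delicate situation that $\zeta$ is extremely close to a non-principal $\lambda/\mu$, handled in Cases \hyperref[case:3]{3}--\hyperref[case:5]{5}). Your proposal contains no mechanism for obtaining the second, high-quality approximation, so the covering inequality cannot be made to survive the Jacobsthal shift.
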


For example, if $|\Delta|$ is prime then $\delta=\pm\sqrt{\Delta}$, so Theorem \ref{thm:intro} can be rewritten as $(\sqrt{|\Delta|}-2)/\sqrt{3}<\cS(\Gamma)<14J^2\!\sqrt{|\Delta|}$. But if $\Delta$ is even then $\delta=\pm\Delta/2$. For large $|\Delta|$ this makes $|\delta|>J\sqrt{|\Delta|}$, so Theorem \ref{thm:intro} becomes $|\Delta|/16<\cS(\Gamma)<7J|\Delta|$. See Figure \ref{fig:6} for a graph displaying the bounds in Theorem \ref{thm:intro} for $|\Delta|<400$.

Combining Theorem \ref{thm:intro} with the asymptotic bound for $\cJ$ coming from Proposition \ref{prop:asymptotic} gives the following.

\begin{corollary}$\cS(\Gamma)\ll J'\max(|\delta|,\,J'\!\sqrt{|\Delta|})$, where $J'=(\log |\Delta|/\log\log|\Delta|)^{ 4.27}$.\end{corollary}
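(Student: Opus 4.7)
The corollary is a direct combination of Theorem \ref{thm:intro} with the asymptotic bound on $\cJ$ from Proposition \ref{prop:asymptotic}. Theorem \ref{thm:intro} delivers the bound $\cS(\Gamma)<14J\max(|\delta|,J\sqrt{|\Delta|})$ for any integer $J$ satisfying the self-referential inequality $\cJ(2\max(|\delta|,J\sqrt{|\Delta|}))\le J$, so the entire task reduces to producing such a $J$ of size $\ll (\log|\Delta|/\log\log|\Delta|)^{4.27}$.

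I would proceed in three short steps. First, invoke Proposition \ref{prop:asymptotic} to record the growth estimate $\cJ(m)\ll (\log m/\log\log m)^{4.27}$ as $m\to\infty$. Second, propose the candidate
$$J':=\bigl\lceil C\,(\log|\Delta|/\log\log|\Delta|)^{4.27}\bigr\rceil$$
for a sufficiently large absolute constant $C$, and note that since $|\delta|\le|\Delta|$ and $J'$ is only polylogarithmic in $|\Delta|$, the argument $m:=2\max(|\delta|,J'\sqrt{|\Delta|})$ is bounded by $2J'|\Delta|$. Third, take logarithms twice to get $\log m=(1+o(1))\log|\Delta|$ and $\log\log m=(1+o(1))\log\log|\Delta|$, so that
$$\cJ(m)\ll\bigl(\log|\Delta|/\log\log|\Delta|\bigr)^{4.27}\le J',$$
provided $C$ is chosen to absorb the implicit constant. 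Feeding $J=J'$ into Theorem \ref{thm:intro} then yields $\cS(\Gamma)<14J'\max(|\delta|,J'\sqrt{|\Delta|})$, which is the claimed bound.

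The only genuine subtlety is the self-referential character of the hypothesis on $J$: it must exceed $\cJ$ of a quantity that itself involves $J$. The reason the exponent $4.27$ carries over intact from $\cJ$ to $J'$ is that the $J$-dependence inside the argument contributes only an additive $\log J\ll\log\log|\Delta|$ to $\log m$, which is negligible against $\log|\Delta|$; thus the fixed-point condition is not tight and can be satisfied with the smallest plausible choice. Finally, for the finitely many small $|\Delta|$ where $\log\log|\Delta|$ is ill-behaved or where Proposition \ref{prop:asymptotic} has not yet entered its asymptotic regime, the implied constant in $\ll$ can be enlarged to handle the exceptions by inspection.
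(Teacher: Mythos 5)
Your argument is correct and is essentially the paper's proof: both take $J$ to be a constant multiple of $(\log|\Delta|/\log\log|\Delta|)^{4.27}$ and verify the fixed-point hypothesis of Theorem \ref{thm:up} for all sufficiently large $|\Delta|$ (the paper simply observes $2\max(|\delta|,J\sqrt{|\Delta|})\leq|\Delta|$ eventually, in place of your $m\leq 2J'|\Delta|$ and double-logarithm computation, but the two are interchangeable). The one step you elide is that Proposition \ref{prop:asymptotic} bounds $\cj(\fa)$ by $\omega(\fa)^{4.27}$ rather than by a power of $\log\|\fa\|$; to reach the estimate $\cJ(x)\ll(\log x/\log\log x)^{4.27}$ that you invoke, one still needs the standard bound $\omega(\fa)\ll\log\|\fa\|/\log\log\|\fa\|$, for which the paper cites Robin's explicit inequality.
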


So we have found $\cS(\Gamma)$ to within a power of $\log|\Delta|$. It is $|\delta|$, the largest magnitude among divisors of $\Delta$ in $\cO$. Therefore, despite the form of prior bounds and estimations, $\lim_{\Delta}(\log\cS(\Gamma)/\log|\Delta|)$ does not exist. Rather, $$\liminf_{\Delta\to-\infty}\frac{\log\cS(\Gamma)}{\log|\Delta|}=\frac{1}{2}\hspace{0.5cm}\text{and}\hspace{0.5cm}\limsup_{\Delta\to-\infty}\frac{\log\cS(\Gamma)}{\log|\Delta|}=1.$$

In Section \ref{sec:2} the lower bound in Theorem \ref{thm:intro} is proved. Section \ref{sec:3} defines our analog of Jacobsthal's function and gives it an asymptotic upper bound. Section \ref{sec:4} proves the upper bound in Theorem \ref{thm:intro}.

\section{A lower bound for Swan's number}\label{sec:2}

\begin{definition}\label{def:sing}A \emph{singular point} is any $\zeta\in K$ such that if the fractional ideal $(\zeta,1)$ equals $(\alpha,\beta)$ for $\alpha,\beta\in K$, then $|\beta|\geq 1$.\end{definition}

As written, only $\zeta$ with $(\zeta,1)$ non-principal might possibly be a singular point, because $(\zeta,1) = (\alpha,0)$ violates the lower bound on $|\beta|$. Conversely, every nontrivial ideal class has corresponding singular points. Indeed, $(\zeta,1)$ has some nonzero element of minimal magnitude, say $\beta$. It is a fact of Dedekind domains that there exists $\alpha$ with $(\alpha,\beta)=(\zeta,1)$. Then if $\alpha\neq 0$, it follows from the definition that $\alpha/\beta$ is singular.

The singular points for $\bQ(\sqrt{-23})$ are seen in Figure \ref{fig:1} at the intersections of its smallest discs' boundaries. Figure \ref{fig:2} gives a close-up of the three (up to translation) singular points for $\bQ(\sqrt{-34})$, one for each nontrivial ideal class. They lie on the boundary of many discs, but never in the interior.

Proofs for the two well-known observations below are sketched. See Section 7.2 of \cite{elstrodt} for details.

\begin{lemma}\label{lem:discrete}The set of singular points in $K$ is discrete.\end{lemma}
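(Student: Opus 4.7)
The plan is to rule out any accumulation point of singular points in $\bC$ by a pigeonhole argument on ideal classes. Suppose for contradiction that a sequence of distinct singular points $(\zeta_n)$ has a finite accumulation point. Since $K$ has finite class number, I can pass to a subsequence along which every fractional ideal $(\zeta_n, 1)$ lies in the same ideal class. Fixing an integral ideal $I$ representing that class, write $(\zeta_n, 1) = \gamma_n I$ for some $\gamma_n \in K^{\times}$.

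The core step is to show that the singular condition forces $1/\gamma_n$ to attain the minimum nonzero magnitude on $I$. The standard fact that in a Dedekind domain every nonzero element of a fractional ideal extends to a two-element generating set lets me reformulate the definition: $\zeta$ is singular iff every nonzero element of $(\zeta, 1)$ has magnitude at least $1$. Applied to $(\zeta_n, 1) = \gamma_n I$, and writing $1 = \gamma_n b_n$ with $b_n \in I$, this says $|\gamma_n c| \geq 1$ for every nonzero $c \in I$, i.e., $|c| \geq |b_n|$. Since $b_n$ is itself in $I$, this pins $|b_n|$ to the minimum nonzero magnitude $m_I$ on the lattice $I$.

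A lattice in $\bC$ contains only finitely many points of any given magnitude, so after passing to a further subsequence I may assume $b_n = b$ is constant. Writing $\zeta_n = \gamma_n a_n = a_n/b$ with $a_n \in I$, the convergence of $\zeta_n$ forces convergence of $a_n = b\zeta_n$ in $\bC$. But $I$ is a discrete lattice, so the sequence $(a_n)$ is eventually constant; hence so is $(\zeta_n)$, contradicting distinctness.

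The main subtlety I anticipate is in passing from the literal wording of Definition \ref{def:sing}, which only quantifies over two-element presentations $(\zeta, 1) = (\alpha, \beta)$, to the reformulation that \emph{every} nonzero element of $(\zeta, 1)$ has magnitude at least $1$. This requires the Dedekind-domain fact that any nonzero element of a fractional ideal arises as the second generator in some two-generator presentation. Once that input is invoked, the rest of the argument is just pigeonhole on the finite class group combined with the discreteness of a lattice in $\bC$.
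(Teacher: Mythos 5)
Your argument is correct and follows essentially the same route as the paper: both use the two-generator (Dedekind) reformulation of singularity to pin the denominator to one of finitely many minimal-magnitude lattice elements per ideal class, so that all singular points land in a finite union of lattices. The only cosmetic difference is that you phrase the conclusion contrapositively via an accumulating sequence rather than stating the finite-union-of-lattices containment directly.
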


\begin{proof}For a singular point $\zeta$, let $\beta\in\cO$ be such that $\beta(\zeta,1)$ is integral with minimal norm among integral ideals in the same class. Then $\beta$ must have minimal magnitude among nonzero elements of $\beta(\zeta,1)$ to avoid contradicting Definition \ref{def:sing}. Now, there are finitely many ideal classes, each with finitely many integral ideals of minimal norm, each with finitely many nonzero elements of minimal magnitude like $\beta$. So since $\zeta=\beta\zeta/\beta\in(\beta)^{-1}$, we see that all singular points are contained in a finite union of fractional ideals, each of which forms a lattice in $\bC$.\end{proof}

\begin{lemma}\label{lem:sing}$\zeta\in K$ is not a singular point if and only if $|\mu\zeta-\lambda|<1$ for coprime $\lambda,\mu\in\cO$ with $|\mu|\leq\cS(\Gamma)$.\end{lemma}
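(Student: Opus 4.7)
The plan is to establish the two directions of the biconditional separately.

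For the $(\Leftarrow)$ direction, which does not need the bound $|\mu|\leq\cS(\Gamma)$, I will argue constructively. Suppose coprime $\lambda,\mu\in\cO$ satisfy $|\mu\zeta-\lambda|<1$. I set $\beta:=\mu\zeta-\lambda$, use coprimality to pick $p,q\in\cO$ with $\mu p+\lambda q=1$, and let $\alpha:=p+q\zeta$. Both $\alpha$ and $\beta$ lie in $(\zeta,1)$, while the identities $\mu\alpha-q\beta=1$ and $\lambda\alpha+p\beta=\zeta$ (immediate expansions) supply the reverse inclusion $(\zeta,1)\subseteq(\alpha,\beta)$. Combined with $|\beta|<1$, this exhibits $\zeta$ as non-singular.

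For the $(\Rightarrow)$ direction, I split into an algebraic step producing coprime $\mu,\lambda$ with $|\mu\zeta-\lambda|<1$ (without any size bound on $|\mu|$) and a geometric step sharpening to $|\mu|\leq\cS(\Gamma)$. After handling the trivial cases $\zeta\in\bQ$ by direct Bézout (taking $\mu=1,\lambda=\zeta$ if $\zeta\in\cO$, or $\mu p-\lambda q=1$ for $\zeta=p/q$ with $|q|>1$), I may assume $\zeta\notin\bQ$ and fix $(\zeta,1)=(\alpha,\beta)$ with $0<|\beta|<1$. Writing $\alpha=-\lambda'+\mu'\zeta$ and $\beta=-\lambda+\mu\zeta$ with all four entries in $\cO$, the $\cO$-linear independence of $\{1,\zeta\}$ lets me unpack the conditions $1,\zeta\in(\alpha,\beta)$ into a matrix identity $MN=I_{2}$ over $\cO$ with $M=\begin{pmatrix}\mu'&\mu\\ -\lambda'&-\lambda\end{pmatrix}$. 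Hence $\det M=\mu\lambda'-\mu'\lambda$ is a unit and lies in the ideal $(\mu,\lambda)$, forcing $(\mu,\lambda)=\cO$.

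The geometric step then upgrades any such coprime pair to one with $|\mu|\leq\cS(\Gamma)$. The apex height of $S_{\lambda/\mu}$ over $\zeta$ is $\sqrt{1/|\mu|^{2}-|\zeta-\lambda/\mu|^{2}}>0$; by discreteness of $\cO$, only finitely many admissible coprime pairs reach above any positive threshold, so the supremum of apex heights is attained, say by $(\lambda^{*},\mu^{*})$. The apex $(\zeta,t^{*})$ on $S_{\lambda^{*}/\mu^{*}}$ lies in $\cB$, since no hemisphere can rise above $t^{*}$ over $\zeta$; and as a boundary point of the polyhedron $\cB$ it lies on some face hemisphere $S_{g'}$, whose hemisphere therefore also attains height $t^{*}$ at $\zeta$. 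Thus $\zeta$ sits strictly inside the projection of a face hemisphere, with its $|\mu|$ bounded by $\cS(\Gamma)$ by definition of Swan's number.

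The main obstacle is the last step: arguing that the maximum-height apex over $\zeta$ lies on an actual face rather than at an isolated intersection point of hemispheres. When several hemispheres tie at the maximum over $\zeta$, a continuity argument on the finite competing set together with a Baire-type observation (a finite family of closed sets whose union is a neighborhood has at least one member with nonempty interior) shows that at least one of them dominates on an open neighborhood of $\zeta$ and hence contributes an open subset to $\cB$.
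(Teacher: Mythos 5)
Your proposal is correct and follows the same route as the paper's (deliberately sketched) proof: first the algebraic equivalence between Definition \ref{def:sing} and the existence of coprime $\lambda,\mu$ with $|\mu\zeta-\lambda|<1$, then the geometric observation that $\zeta$ must lie under an actual face of $\cB$, so that $|\mu|\leq\cS(\Gamma)$ can be imposed. You have essentially filled in the details the paper defers to Section 7.2 of \cite{elstrodt}; the only phrasing to tighten is in the final step, where Baire gives you a tying hemisphere that dominates on \emph{some} open set near $\zeta$ (not necessarily a full neighborhood of $\zeta$), which still suffices because every tying hemisphere has positive apex height over $\zeta$ and hence contains $\zeta$ strictly inside its projection.
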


\begin{proof}Given $\beta\in K$, the existence of $\alpha\in K$ for which $(\zeta,1)=(\alpha,\beta)$ is equivalent to $\beta=\mu\zeta-\lambda$ for coprime $\lambda,\mu\in\cO$. So by Definition \ref{def:sing}, $\zeta$ is not singular if and only if $|\mu\zeta-\lambda|<1$ for coprime $\lambda,\mu\in\cO$. This means $\zeta$ lies in the projection of $S_{\lambda,\mu}$. But then projections of the faces of $\cB$ also cover $\zeta$. So we may take $|\mu|\leq\cS(\Gamma)$.\end{proof}

\begin{figure}
    \centering
    \includegraphics[trim=3.0cm 2cm 3.1cm 1.6cm,clip,height=4.89cm]{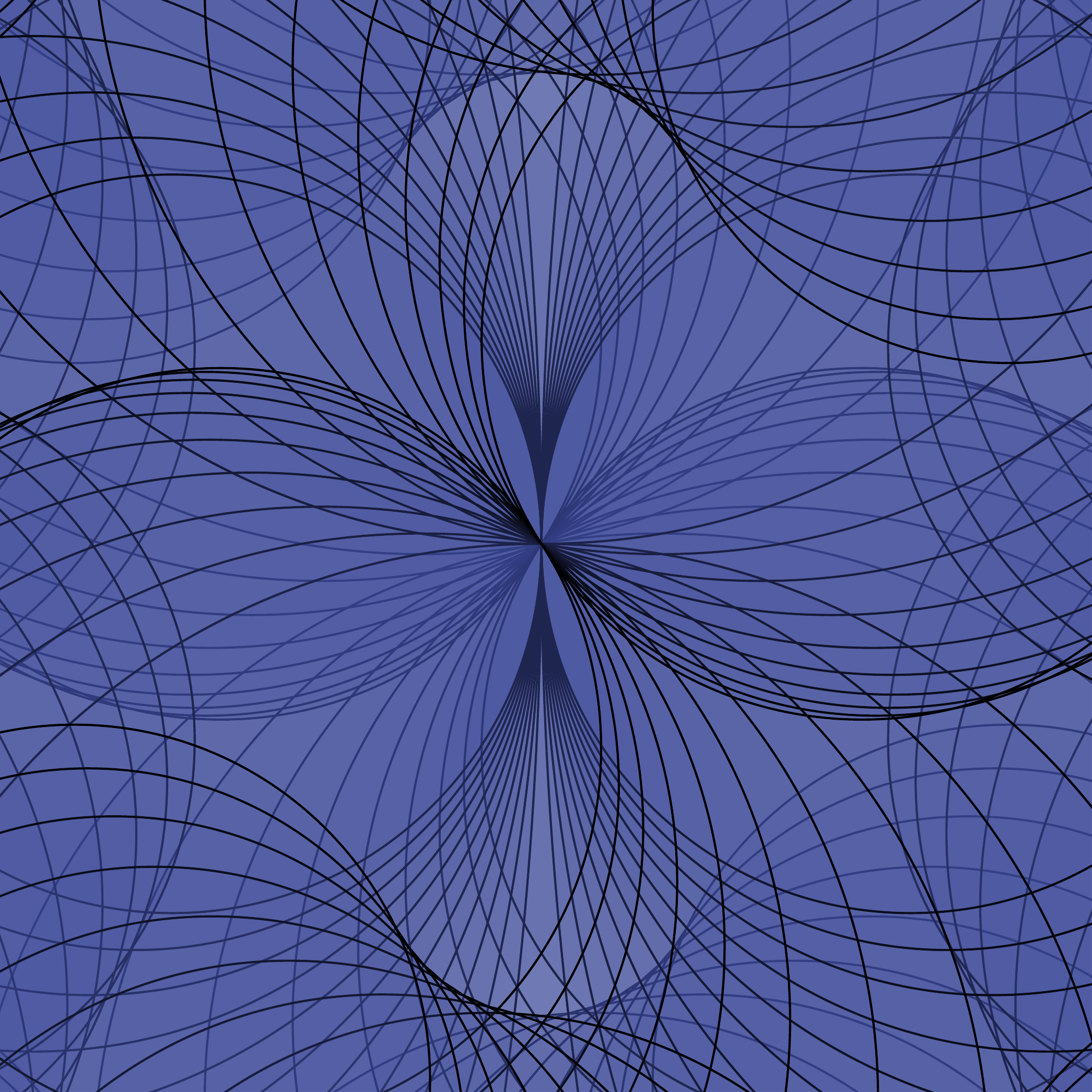}\hspace{0.16cm}
    \includegraphics[trim=2.2cm 0.8cm 2.2cm 0.8cm,clip,height=4.89cm]{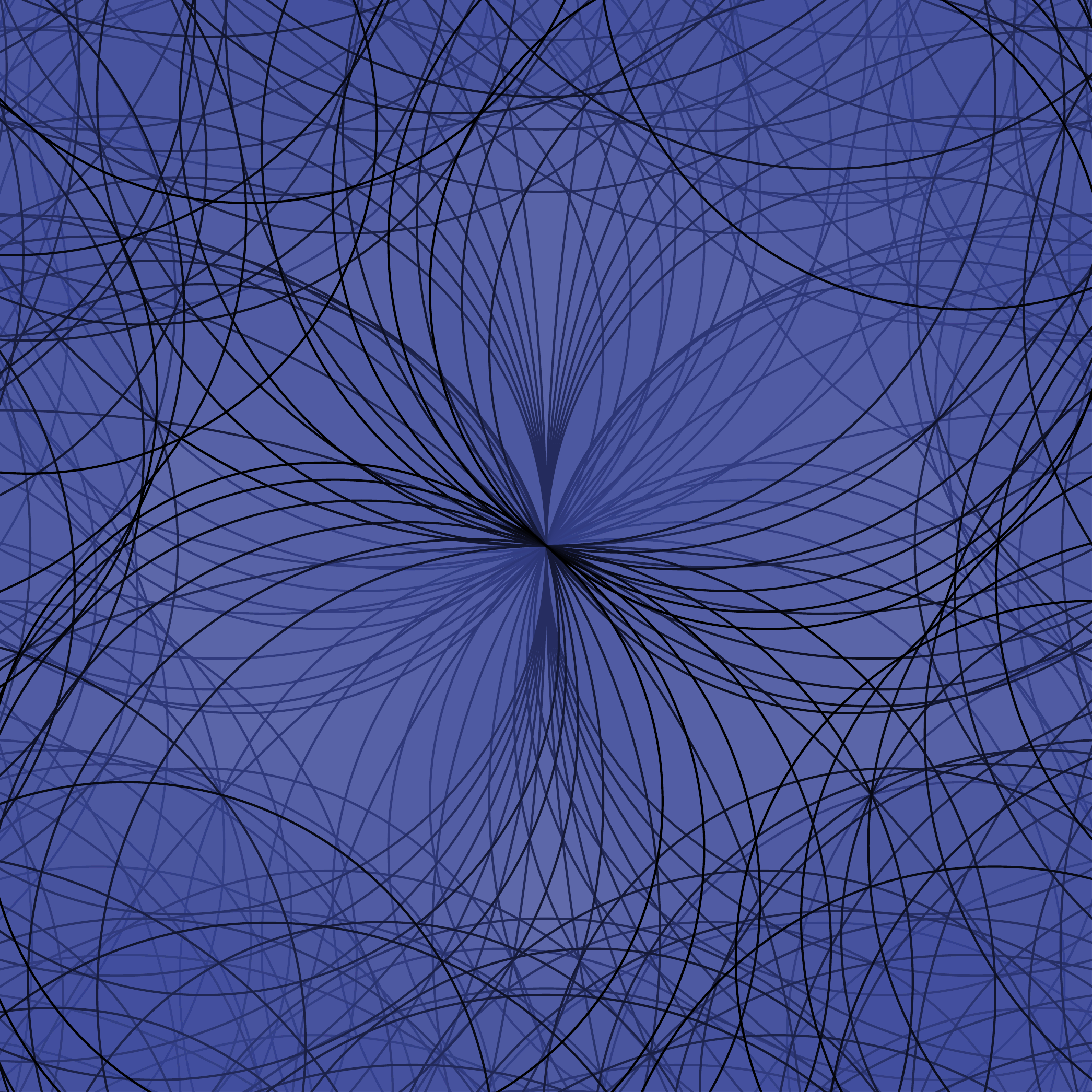}\hspace{0.16cm}
    \includegraphics[trim=3.6cm 0cm 3.6cm 0cm,clip,height=4.89cm]{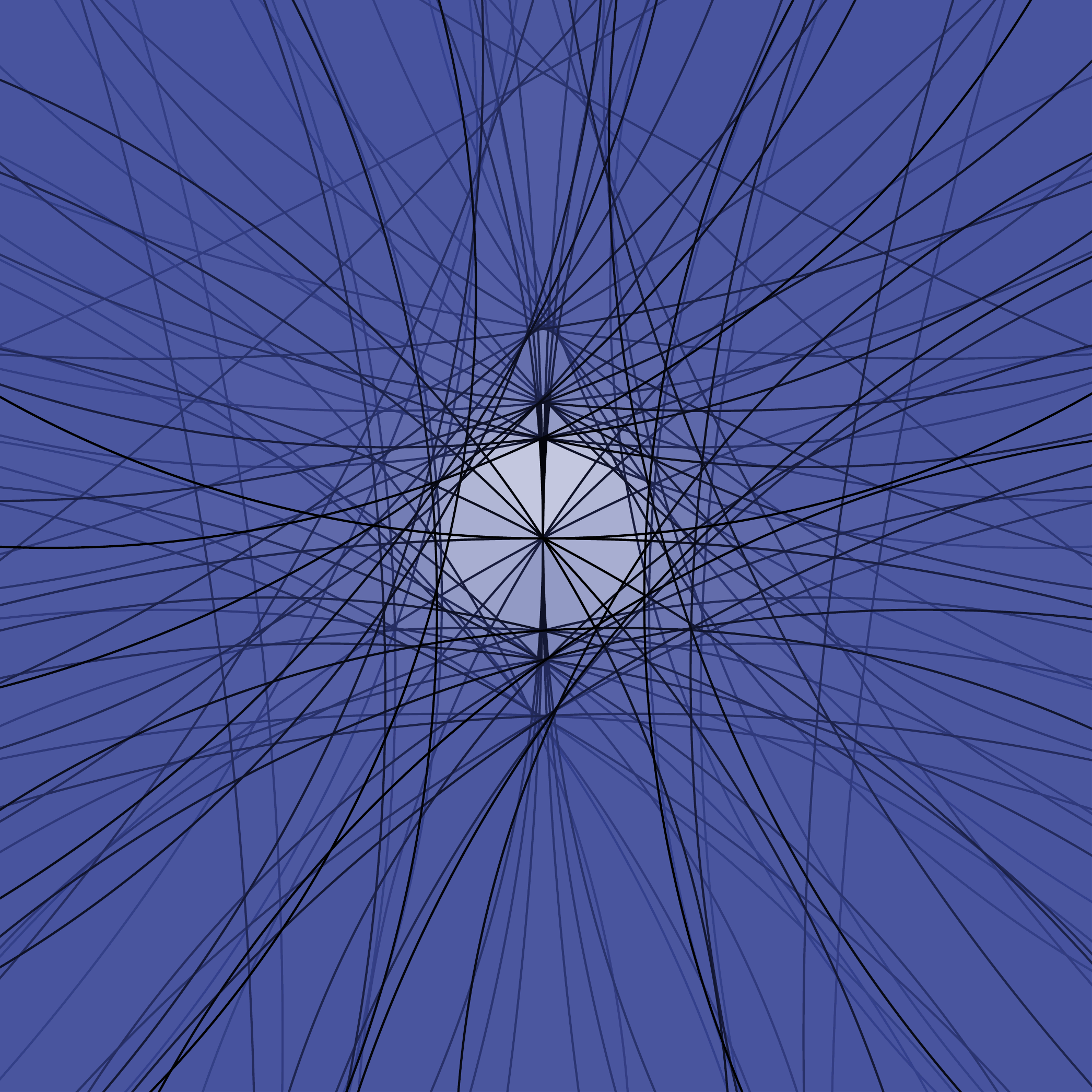}
    \captionsetup{width=0.967\textwidth}
    \caption{Projections of all $S_{\lambda/\mu}$ with $|\mu|\leq\cS(\Gamma)=\sqrt{528}$ around the singular points $(1+\sqrt{-33})/2$ (left), $\sqrt{-33}/3$ (middle), and $(3+\sqrt{-33})/6$ (right).}
    \label{fig:2}
\end{figure}

The next result is our best lower bound for $\cS(\Gamma)$ when $|\Delta|$ is prime or the product of two primes $p<p'$ with $p$ at least $3p'/64$. Otherwise Theorem \ref{thm:low} is stronger. 

\begin{proposition}\label{prop:low} $\cS(\Gamma)>(\sqrt{|\Delta|}-2)/\sqrt{3}$.\end{proposition}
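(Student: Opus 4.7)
The plan is to assume $\cS(\Gamma)\leq M:=(\sqrt{|\Delta|}-2)/\sqrt{3}$ for contradiction, then exhibit a point $\zeta_0\in\bC$ that lies in no closed disc $\overline{D}(\lambda/\mu,1/|\mu|)$ with coprime $(\lambda,\mu)\in\cO^2$ and $0<|\mu|\leq M$. Because singular points are discrete (Lemma~\ref{lem:discrete}) and contained in $K$, the argument of Lemma~\ref{lem:sing} extends to show that any $\zeta\in\bC\setminus K$ must actually lie in some such disc with $|\mu|\leq\cS(\Gamma)$: if $\zeta$ were outside every disc, passing to a $K$-limit and using the discreteness of singular points would identify $\zeta$ with a singular point, contradicting $\zeta\notin K$. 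So choosing $\zeta_0\in\bC\setminus K$ with the non-coverage property forces the covering $\mu$ to satisfy $|\mu|>M$, proving $\cS(\Gamma)>M$.

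The candidate I would use is $\zeta_0=\zeta_*+i\varepsilon$, where $\zeta_*$ is a circumcenter of a Delaunay triangle of $\cO$---equivalently, a Voronoi vertex of $\cO$ at maximum distance $\rho$ from the lattice---and $\varepsilon$ is a small transcendental real so that $\zeta_0\notin K$. Explicit computation with the basis $\{1,\omega\}$, $\omega=(D+\sqrt{\Delta})/2$, produces $\rho$ in closed form in each of the two standard lattice shapes (rectangular when $\Delta\equiv 0\pmod 4$, rhombic when $\Delta\equiv 1\pmod 4$). The key inequality is $|\mu\zeta_0-\lambda|\geq 1$ for every coprime $(\lambda,\mu)$ with $0<|\mu|\leq M$. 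For $\mu$ a unit this is immediate from $|\zeta_0-\lambda|\geq\rho\geq 1$ once $|\Delta|$ exceeds a small constant. For $|\mu|\geq 2$, write $\lambda=\mu\nu+\lambda_0$ with $\nu\in\cO$ chosen to place $\zeta_0-\nu$ in the Voronoi cell of $0$, reducing the claim to $|\mu(\zeta_0-\nu)-\lambda_0|\geq 1$ where $\lambda_0$ ranges over the $\Phi(\mu)$ coprime cosets of $\cO/\mu\cO$. The factor $1/\sqrt{3}$ reflects the equilateral-triangle circumradius (three mutually tangent unit discs have pairwise center-distance $\sqrt{3}$ times the common radius), while the additive $-2$ absorbs the diameters of unit discs anchored at the two competing lattice points that flank $\mu\zeta_0$ most closely.

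The hardest step will be the uniform control of $|\mu(\zeta_0-\nu)-\lambda_0|$ when $\mu$ has small prime-ideal factors: primitivity rules out specific cosets of $\cO/\mu\cO$, and the smallest coprime $\lambda_0$ can sit closer to $\mu(\zeta_0-\nu)$ than a generic residue. Pushing the bound through uniformly will require a case split based on how the rational primes $2$ and $3$ decompose in $\cO$, combined with the two explicit forms of $\rho$. If this direct construction proves too delicate, a backup is a counting argument: bound the total area of the forbidden discs intersected with $F$ by $\sum_{(\mu),\,0<|\mu|\leq M}\Phi(\mu)\pi/|\mu|^2$, and show this stays below $\sqrt{|\Delta|}/2=\mathrm{area}(F)$ for $M$ as claimed, which then guarantees a non-singular, non-$K$ point of $F$ outside every covering disc.
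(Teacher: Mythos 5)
There is a genuine gap: your candidate point is the wrong one, and the key inequality you need fails already at $|\mu|=2$. Take $\Delta\equiv 1\pmod 4$ with $\omega=(1+\sqrt{\Delta})/2$. The circumcenter of the Delaunay triangle $\{0,1,\omega\}$ is $\zeta_*=\tfrac12+i\,\tfrac{|\Delta|-1}{4\sqrt{|\Delta|}}$, and a direct computation gives $|2\zeta_*-\omega|=\sqrt{\tfrac14+\tfrac{1}{4|\Delta|}}<1$, so whenever $(\omega,2)=\cO$ (e.g.\ $2$ inert) the deep hole lies strictly inside the disc of radius $1/2$ about $\omega/2$; the situation for $\Delta\equiv 0\pmod 4$ is worse still, since there $\zeta_*=(1+\sqrt{\Delta}/2)/2$ is itself of the form $\lambda/2$. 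So no amount of case analysis on how $2$ and $3$ split will rescue the inequality $|\mu\zeta_0-\lambda|\geq 1$ for $|\mu|\geq 2$. The correct choice (the paper's) is $\zeta_6=e^{i\pi/3}$, which sits at distance exactly $1$ from $0$ and $1$, and the reason nothing else with $|\mu|<(\sqrt{|\Delta|}-2)/\sqrt{3}$ comes within $1/|\mu|$ of it is arithmetic rather than geometric: imaginary parts of elements of $\cO$ are integer multiples of $\sqrt{|\Delta|}/2$, so $|\Im(\mu\zeta_6-\lambda)|$ is forced to be large unless $\mu,\lambda\in\bZ$, and the bound on $|\mu|$ then pins $\lambda/\mu$ to $\{0,1\}$. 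Your heuristic that $\sqrt{3}$ is an equilateral-triangle circumradius points at the right point ($e^{i\pi/3}$ is the apex over $0,1$) but the mechanism is this quantization of imaginary parts, not disc packing.

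Two further problems. First, your backup area count cannot reach the stated constant: with $M=(\sqrt{|\Delta|}-2)/\sqrt{3}$ there are on the order of $\pi M^2/\sqrt{|\Delta|}$ principal ideals $(\mu)$ of norm at most $M^2$, each contributing area comparable to $\pi$, so the sum $\sum\Phi(\mu)\pi/|\mu|^2$ is on the order of $\pi^2|\Delta|/(3\sqrt{|\Delta|})\approx 3.3\sqrt{|\Delta|}$, far exceeding $\operatorname{area}(F)=\sqrt{|\Delta|}/2$; this route only yields roughly $\cS(\Gamma)>\sqrt{|\Delta|}/(\pi\sqrt{2})$. Second, your reduction via a single point $\zeta_0\in\bC\setminus K$ outside all closed discs is not free: to conclude $\cS(\Gamma)>M$ from it you would need to know that the only boundary points of $\bH$ accumulated by $\cB$ are cusps in $K\cup\{\infty\}$, which is essentially geometric finiteness. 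The paper sidesteps this by producing an open uncovered set near $\zeta_6$, which by discreteness of singular points (Lemma~\ref{lem:discrete}) contains a nonsingular element of $K$, to which Lemma~\ref{lem:sing} applies directly; you should do the same.
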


\begin{proof}First observe that $((\sqrt{|\Delta|}-2)/\sqrt{3})^2\not\in\bZ$ unless $|\Delta|$ is a perfect square. This is only true when $\Delta=-4$, in which case the proposition holds trivially. For other discriminants, $|\mu|= (\sqrt{|\Delta|}-2)/\sqrt{3}$ is impossible, allowing us to ignore the strictness of this proposition's inequality. 

Now, let $\mu\in\cO$ with $0<|\mu|< (\sqrt{|\Delta|}-2)/\sqrt{3}$, and let $\zeta_6=e^{i\pi/3}$. We claim that $|\mu \zeta_6-\lambda|\leq 1$ for $\lambda\in\cO$ only if $\lambda/\mu$ is $0$ or $1$. Once proved, $|\zeta_6-\lambda/\mu| > 1/|\mu|$ gives an open set around $\zeta_6$ that is disjoint from any $S_{\lambda/\mu}$ with $|\mu|<(\sqrt{|\Delta|}-2)/\sqrt{3}$ and $\lambda/\mu\neq 0,1$. The two unit discs centered on $0$ and $1$ have $\zeta_6$ on their boundary, so these discs only cut off the bottom portion of our open set. By Lemma \ref{lem:discrete}, what remains contains some $\zeta\in K$ which is not singular. The claim follows by applying Lemma \ref{lem:sing} to such a $\zeta$.

If $\mu\in\bZ$ then $|\Im(\mu \zeta_6)| = \sqrt{3}|\mu|/2<\sqrt{|\Delta|}/2 - 1$. Thus the only multiple of $\sqrt{|\Delta|}/2$ that $\Im(\lambda)$ might equal to achieve $|\mu \zeta_6-\lambda|\leq 1$ is $0$. So $\lambda\in\bZ$, which forces $|\mu|=1$ to achieve $|\Im(\mu \zeta_6-\lambda)|=\sqrt{3}|\mu|/2\leq 1$, in turn forcing $\lambda/\mu=0$ or $1$.

If $\mu\not\in\bZ$ then $|\Im(\mu)|=\sqrt{|\Delta|}/2$. Combined with $|\mu|<(\sqrt{|\Delta|}-2)/\sqrt{3}$, this gives \begin{equation}|\Re(\mu)|< \frac{\sqrt{|\Delta|}-4}{2\sqrt{3}}.\label{eq:2}\end{equation} Also, $|\mu \zeta_6|=|\mu|<\sqrt{|\Delta|}-1$, so we can only hope to achieve $|\mu \zeta_6-\lambda|\leq 1$ if $\Im(\lambda)$ is $0$ or $\pm\sqrt{|\Delta|}/2$. In particular, $|\Im(\mu)\Re(\zeta_6)-\Im(\lambda)|=|\pm\!\sqrt{|\Delta|}/4-\Im(\lambda)|\geq\sqrt{|\Delta|}/4$. But then $$|\Im(\mu \zeta_6-\lambda)|=|\Re(\mu)\Im(\zeta_6)+\Im(\mu)\Re(\zeta_6)-\Im(\lambda)|\geq -\frac{\sqrt{3}|\Re(\mu)|}{2}+\frac{\sqrt{|\Delta|}}{4}> 1,$$ where the last inequality uses (\ref{eq:2}).\end{proof}

\begin{theorem}\label{thm:low}If $\delta\in\cO$ has maximal magnitude among proper divisors of $\Delta$, then $\cS(\Gamma)>|\delta|/8$.\end{theorem}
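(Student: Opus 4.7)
The plan is to mimic Proposition \ref{prop:low}: produce a non-singular $\zeta\in K$ with $|\mu\zeta-\lambda|\geq 1$ for every coprime pair $\lambda,\mu\in\cO$ satisfying $|\mu|\leq|\delta|/8$. Once such $\zeta$ is found, Lemma \ref{lem:sing} forces $\cS(\Gamma)>|\delta|/8$: it guarantees a coprime pair $(\lambda,\mu)$ with $|\mu\zeta-\lambda|<1$ and $|\mu|\leq\cS(\Gamma)$, and by construction that $\mu$ must satisfy $|\mu|>|\delta|/8$.

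The natural candidate is $\zeta=a/\delta$ for some $a\in\cO$ coprime to $\delta$, because the quantity of interest then factors as
\[
|\mu\zeta-\lambda|\;=\;\frac{|\mu a-\lambda\delta|}{|\delta|},
\]
reducing matters to finding $a$ with $|\mu a-\lambda\delta|\geq|\delta|$ for all coprime $(\lambda,\mu)$ with $0<|\mu|\leq|\delta|/8$. The trivial bound $|\mu a-\lambda\delta|\geq 1$ (nonzero element of $\cO$) is weaker than what I need by a factor of $|\delta|$; I would instead place $a$ so that its residue $\mu a\pmod{\delta\cO}$ sits in a \emph{deep hole} of the sublattice $\delta\cO$, i.e., at distance at least $|\delta|$ from every $\lambda\delta$. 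The hypothesis $\delta\mid\Delta$ is what makes this possible: since $(\Delta)=(\sqrt{\Delta})^2$ and $(\sqrt{\Delta})$ is supported on the ramified primes, $(\delta)$ factors as $\prod\mathfrak{p}^{e_{\mathfrak{p}}}$ with $e_{\mathfrak{p}}\leq 2$, which rigidifies the quotient $\cO/\delta\cO$. The factor $1/8$ should emerge from a geometry-of-numbers computation, balancing the ball $\{\mu:|\mu|\leq|\delta|/8\}$ against the covering radius of $\delta\cO$, which is of order $|\delta|\sqrt{|\Delta|}/4$.

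Non-singularity of $\zeta=a/\delta$ is automatic, since $(\zeta,1)=(1/\delta)(a,\delta)=(1/\delta)$ is principal, so the remark after Definition \ref{def:sing} applies. If the strict inequality $|\mu a-\lambda\delta|>|\delta|$ fails at finitely many coprime pairs, a small perturbation via Lemma \ref{lem:discrete} (replacing $\zeta$ by a nearby $\zeta'\in K$ at which all the strict inequalities hold and no singular point lies) completes the argument.

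The principal obstacle is verifying $|\mu a-\lambda\delta|\geq|\delta|$ uniformly for \emph{every} coprime $(\lambda,\mu)$ with $|\mu|\leq|\delta|/8$. Unlike the clean rational-versus-irrational dichotomy driving Proposition \ref{prop:low}, here $\delta\cO$ is a large anisotropic sublattice and the behavior of $\mu a\pmod{\delta\cO}$ depends on how $\mu$ interacts with the ramified primes dividing $\delta$. A case analysis keyed to the factorization of $\delta$, together with an explicit choice of $a$ realizing a deep hole of $\delta\cO$, is where essentially all of the work will lie; the constant $8$ is the sharp outcome of that lattice estimate.
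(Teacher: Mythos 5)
Your overall framework (find a nonsingular $\zeta$ uncovered by every $S_{\lambda/\mu}$ with $|\mu|\leq|\delta|/8$, then invoke Lemma \ref{lem:sing}) matches the paper's, but the choice $\zeta=a/\delta$ with $(a,\delta)=\cO$ defeats the argument, and the "deep hole" inequality you need is false in exactly the regime where the theorem has content. The theorem only requires new work beyond Proposition \ref{prop:low} when $|\delta|\gg\sqrt{|\Delta|}$ (e.g.\ $\delta=\pm\Delta/2$ for even $\Delta$). In that regime a Minkowski/Thue count (essentially Lemma \ref{lem:thue} with $r=|\delta|/8$) produces, for \emph{every} $\zeta$, a pair $\lambda,\mu\in\cO$ with $0<|\mu|<|\delta|/8$ and $|\mu\zeta-\lambda|\ll\sqrt{|\Delta|}/|\delta|<1$; equivalently, no $a$ makes $|\mu a-\lambda\delta|\geq|\delta|$ hold for all pairs with $|\mu|\leq|\delta|/8$. (Your covering-radius heuristic handles one $\mu$ at a time; summing the excluded sets of $a$ over the $\asymp|\delta|^2/\sqrt{|\Delta|}$ admissible $\mu$'s gives total density $\asymp|\delta|^2/|\Delta|\gg 1$, so no simultaneous deep hole survives.) The only thing that can rescue such a $\zeta$ is the coprimality requirement: the statement can hold only if every violating pair $(\lambda,\mu)$ has a \emph{non-principal} common divisor, since a principal common divisor can be divided out to yield a better coprime approximation. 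Your $\zeta$, with $(\zeta,1)=(1/\delta)$ principal, provides no mechanism forcing this; indeed B\'ezout already gives coprime $\lambda,\mu$ with $\mu a-\lambda\delta=1$, and nothing ties the Dirichlet-quality approximants of $a/\delta$ to any fixed non-principal ideal.

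This is precisely where the paper's proof puts all its effort, and why it makes the opposite choice of ideal class. After reducing to $|\delta|>4\sqrt{|\Delta|}$ (so $\delta=\pm\Delta/p$ with $p$ the smallest prime divisor of $\Delta$), it sets $\zeta=ap/\pi$ with $\pi$ a generator-adjacent element of the ramified prime $\fp$ over $p$ and $a\equiv p^{-1}\bmod|\pi|^2/p$, so that $(\zeta,1)$ lies in the \emph{non-principal} class of $\fp$; it then (i) verifies $\zeta$ is nonetheless not singular, and (ii) shows via a congruence analysis on $\Im(\mu\zeta)$ that any $\lambda,\mu$ with $0<|\mu|<|\delta|/8$ and $|\mu\zeta-\lambda|<1$ satisfy $(\lambda,\mu)\subseteq\fp$, hence are never coprime. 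To repair your write-up you would have to abandon the principal choice of $\zeta$ and build in such a non-principal obstruction; as proposed, the key uniform inequality cannot be established.
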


\begin{proof}If $|\delta|/8\leq (\sqrt{|\Delta|}-2)/3$, the claim follows directly from Proposition \ref{prop:low}. This inequality holds when $|\delta|\leq 4\sqrt{|\Delta|}$ provided $|\Delta|\geq 223$. Theorem \ref{thm:low} can be verified manually for $|\Delta|<223$ (data for $|\Delta|<400$ is displayed in Figure \ref{fig:6}), so we assume for the proof that $|\delta| > 4\sqrt{|\Delta|}$.

Outside of rational integers, the only divisors of $\Delta$ are $\sqrt{\Delta}$ and, when $\Delta$ is even, $\sqrt{\Delta}/2$. Therefore $|\delta| > 4\sqrt{|\Delta|}$ forces $\delta=\pm\Delta/p$, where $p$ is the smallest rational prime dividing $\Delta$. Let $\fp$ be the prime over $p$, and let $\pi$ be $-\sqrt{\Delta}/2$ or $(p-\sqrt{\Delta})/2$ so that $\pi\in\fp$. Then $\bZ[\pi]=\cO$ and $p^2\nmid |\pi|^2$. Take $a\in\bZ$ with $a\equiv p^{-1}\, \text{mod}\,|\pi|^2/p$ and set $\zeta=ap/\pi$. 

Our first assertion is that $\zeta$ is not a singular point. To see this, note that the smallest nonzero elements in the ideal $(ap,\pi)=(p,\pi)=\fp$ are $\pm p$. So $\zeta$ is a singular point if and only if $ap/\pi=\alpha/p$ for some $\alpha\in\fp$. But comparing powers of $\fp$ on either side of $ap^2=\alpha\pi$ shows that $\fp^2=(p)$ divides $\alpha$, contradicting nontriviality of the ideal class $[(\alpha,p)]=[(\zeta,1)]$. Thus $\zeta$ is not a singular point. Therefore, by Lemma \ref{lem:sing} there are coprime $\lambda,\mu\in\cO$ with $|\mu|\leq\cS(\Gamma)$ and $|\mu\zeta-\lambda|<1$.

Now suppose $\lambda,\mu\in\cO$ are such that $0<|\mu|< |\delta|/8$ and $|\mu\zeta-\lambda|< 1$. Our second assertion is that $(\lambda,\mu)\subset\fp$. This will complete the proof as $8\nmid \delta$ makes $|\mu|=|\delta|/8$ impossible, allowing us to ignore the strictness of the theorem's inequality.

Write $\mu=b+c\pi$ for $b,c\in\bZ$. Since $$\Im(\mu\zeta)=\frac{ab\sqrt{|\Delta|}}{2|\pi|^2/p}$$ has to be within $1$ of $\Im(\lambda)$---a multiple of $\sqrt{|\Delta|}/2$---there must be a rational integer in the congruence class $ab\,\text{mod}\,|\pi|^2/p$ that has magnitude less than $2|\pi|^2/p\sqrt{|\Delta|}$. But then $a\equiv p^{-1}\, \text{mod}\,|\pi|^2/p$ means \begin{equation}\pm\left\{0,p,2p,...,\left\lfloor\frac{2|\pi|^2}{p\sqrt{|\Delta|}}\right\rfloor\!p\right\}\label{eq:3}\end{equation} is a complete set of possible representatives for $b\,\text{mod}\,|\pi|^2/p$. The initial assumption that $4\sqrt{|\Delta|}<|\delta|=|\Delta|/p$ bounds the elements in (\ref{eq:3}) by $|\pi|^2/2p$, so they are reduced congruence class representatives in magnitude. It is straightforward to check that $|b+c\pi|=|\mu|< |\delta|/8$ implies $|b|< |\delta|/8$, in turn giving $|b|<|\Delta|/8p\leq |\pi|^2/2p$. Therefore $b$ belongs to (\ref{eq:3}), and $\mu\in\fp$.

Next, note that $$\mu\zeta = \frac{ab\overline{\pi}}{|\pi|^2/p} + acp\equiv\frac{ab\overline{\pi}}{|\pi|^2/p}\equiv \frac{(b/p)\overline{\pi}}{|\pi|^2/p}= \frac{b}{\pi}\,\text{mod}\,\fp,$$ where the second congruence uses $\overline{\pi}\in\overline{\fp}=\fp$. So to show that only elements of $\fp$ are strictly within $1$ of $\mu\zeta$, it is enough to check that $0$ is the only integer strictly within $1$ of $b/\pi$. Using $|b|<2|\pi|^2/\sqrt{|\Delta|}$ (from (\ref{eq:3})) gives $|\Re(b/\pi)|,|\Im(b/\pi)|<1$, so it remains only to eliminate $\pm 1$ as nearby integers. But $|b/\pi\pm 1|<1$ if and only if $0<|b|< 2\Re(\pi)$, which no multiple of $p$ satisfies.\end{proof}

\begin{figure}
    \centering
    \begin{overpic}[trim=4cm 0cm 4cm 0cm,clip,height=6cm,unit=1mm]{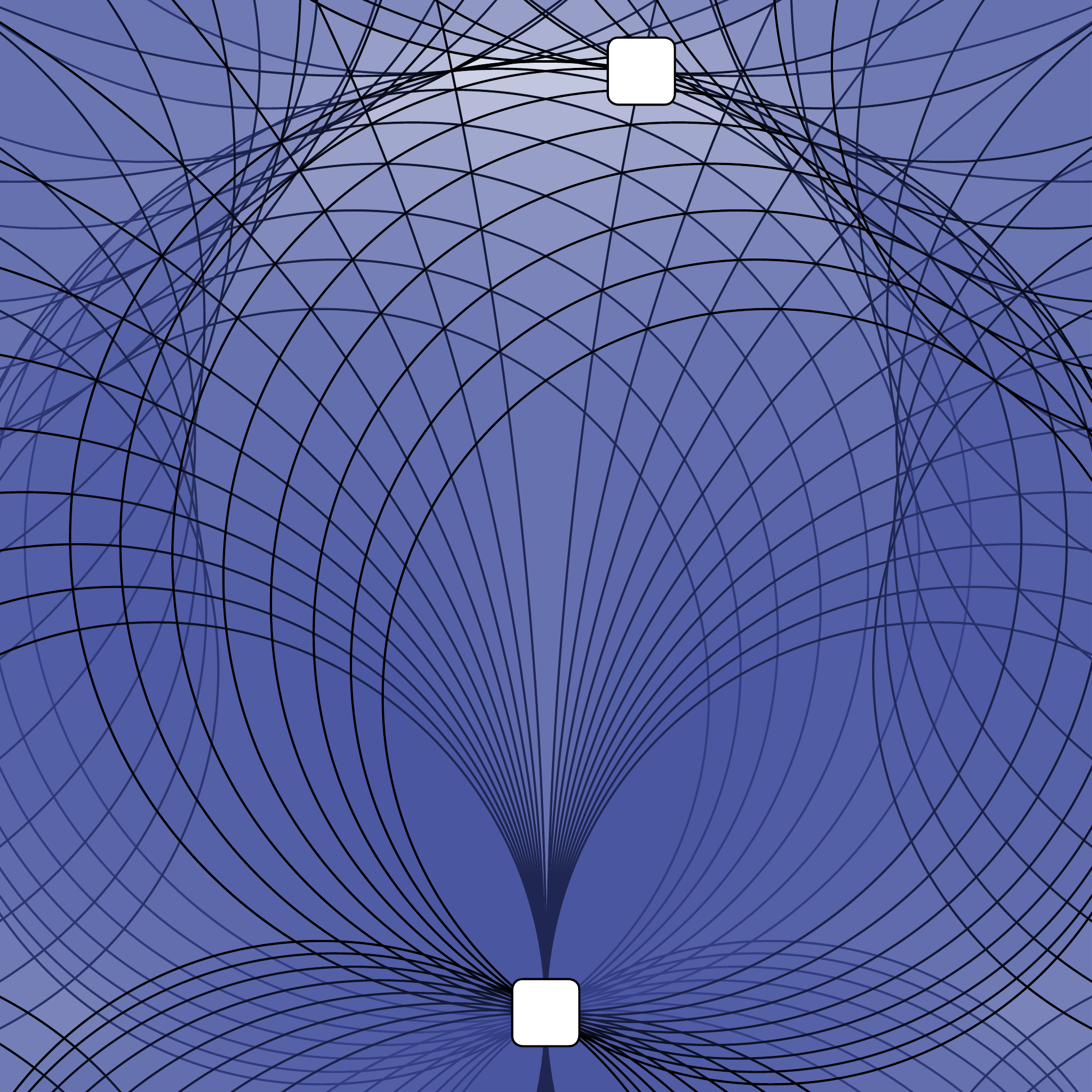}
    \put(17.88,3.2){$1$}
    \put(23.07,54.9){$2$}
    \end{overpic}\hspace{0.2cm}
    \begin{overpic}[trim=0.5cm 0.5cm 0.5cm 0.5cm,clip,height=6cm,unit=1mm]{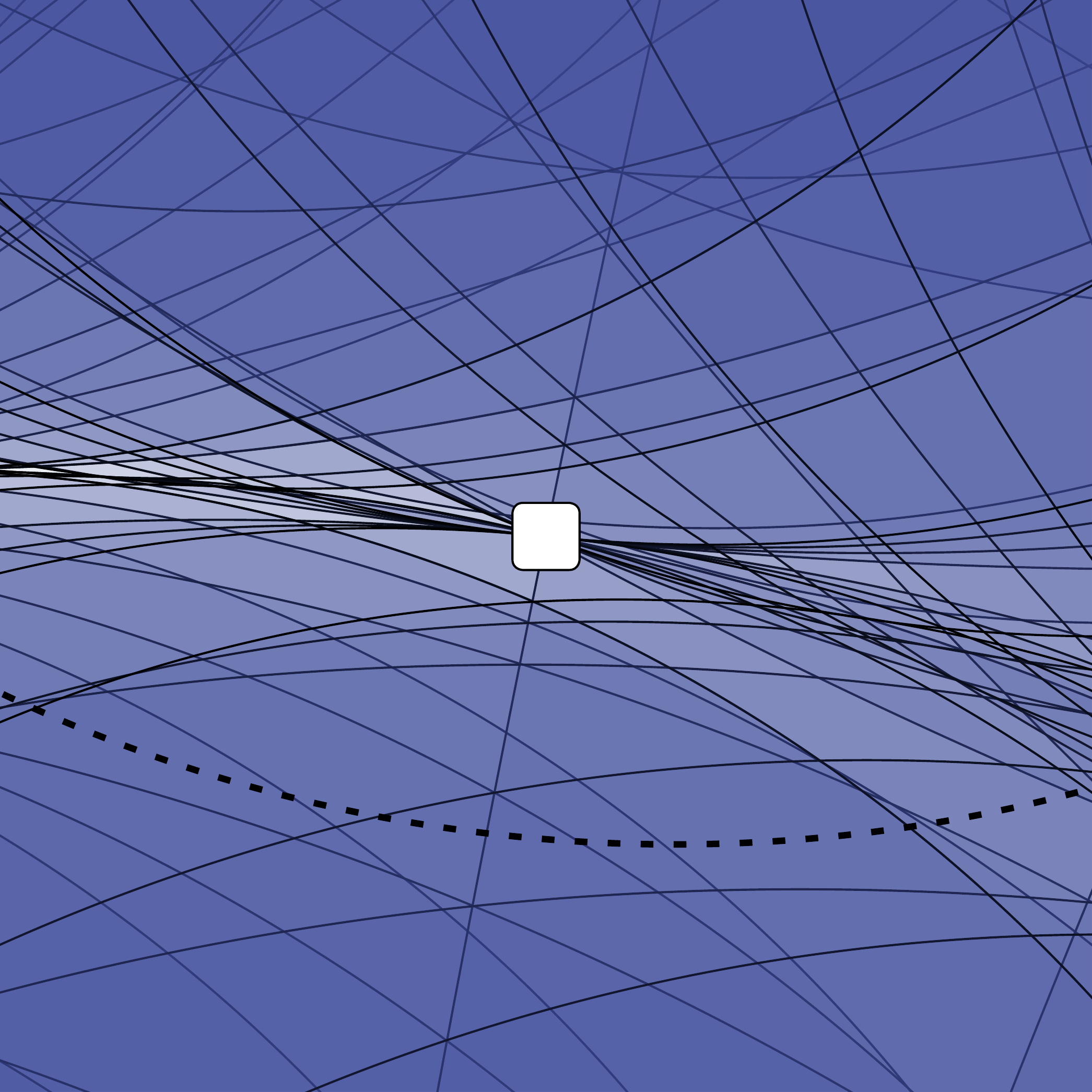}
    \put(29.17,29.4){$2$}
    \end{overpic}
    \captionsetup{width=0.79\textwidth}
    \caption{The singular point $(1+\sqrt{-97})/2$ (``1") and $\zeta$ from the proof, $25(1+\sqrt{97})/49$ (``2"), shown to be uncovered for $|\mu|<41$.}
    \label{fig:3}
\end{figure}

An example is shown in Figure \ref{fig:3} with $\Delta=-388$. Using notation from the proof, the smallest prime dividing $\Delta$ is $p=2$, so $\pi = 1-\sqrt{-97}$. Then $a=25$ solves $a\equiv 2^{-1}\,\text{mod}\,49$, giving $\zeta = 25(1+\sqrt{-97})/49$. This is the point labeled ``$2$."

The point labeled ``$1$" is a singular point for the ideal class of the prime over $2$: $\overline{\pi}/p=(1+\sqrt{-97})/2$. The first image shows how taking projections of $S_{\lambda/\mu}$ for $|\mu| \leq |\delta|/8 = 97/4$ have already surrounded the singular point. In fact, the largest curvature of a hemisphere that contributes a face of $\cB$ and has $(1+\sqrt{-97})/2$ on its boundary is $2\sqrt{97}$. On the other hand, the second image shows that $\zeta$ is not in the interior of a disc even up to $|\mu| < 41$, past the bound $97/4$ guaranteed by Theorem \ref{thm:low}. It is not until $\mu = 41$ that the projection of some $S_{\lambda/\mu}$ contains $\zeta$. This is shown as a dashed circle in the second image. It has center $(21+21\sqrt{-97})/41$.

We mention that $\cS(\Gamma)=45$ for $\bQ(\sqrt{-97})$.

\section{A Jacobsthal-type function}\label{sec:3}

In effort to put an upper bound on $\cS(\Gamma)$, suppose there is $S_{\zeta}$ for $\zeta\in K$ which we aim to show has too small a radius to contribute a face of $\cB$. The strategy is to find a pair of approximations to $\zeta$, say $\lambda/\mu$ and $\lambda'/\mu'$, for which $(\lambda,\mu,\lambda',\mu')$ has only principal prime divisors, if any. Then we look for $j\in\bZ$ such that $(j\lambda+\lambda',j\mu+\mu')$ is principal (as good as coprime because we can divide by a principal generator) and $(j\lambda+\lambda')/(j\mu+\mu')$ still approximates $\zeta$ well. Ideally the hemisphere of $(j\lambda+\lambda')/(j\mu+\mu')$ completely covers $S_\zeta$, eliminating it as a potential face of $\cB$.

The value of $j$ that makes $(j\lambda+\lambda',j\mu+\mu')$ principal is unpredictable. So $\lambda/\mu$ needs to be an approximation of such quality that it affords freedom to $j$. We want $|\mu|$ small enough so that $|j\mu+\mu'|$ does not make the resulting hemisphere too small to cover $S_{\zeta}$. But we also want $|\mu\zeta-\lambda|$ small enough so that $|j(\mu\zeta-\lambda)+(\mu'\zeta-\lambda')|$ does not make the resulting hemisphere too far from $S_{\zeta}$. It is not often that two approximations of such quality can be found, which is why it is not supposed that $\lambda'/\mu'$ can also handle a coefficient, as in $(j\lambda+j'\lambda')/(j\mu+j'\mu')$.

The amount of ``slack" $j$ requires of $\lambda/\mu$ is determined by the following function.

\begin{definition}\label{def:jacob}For $\fa\subseteq\cO$, define $\cj(\fa)$ as the minimum such that, given any $\alpha\in\cO$ for which $(2,\alpha)$ is not a split prime if $2\,|\,\fa$, every closed interval of length $\cj(\fa)$ contains $j\in\bZ$ with $(\fa,\alpha+j)=\cO$.\end{definition}

If $2$ splits and $\Im(\alpha)$ is an odd multiple of $\sqrt{|\Delta|}/2$, then $(2,\alpha+j)$ is prime for all $j\in\bZ$. Definition \ref{def:jacob} avoids this with the constraint on $\alpha$ when $2\,|\,\fa$.

An asymptotic bound on $\cj$ follows readily from a lower bound sieve of dimension two. The argument is included, though it is perhaps well-known to sieve theory.

\begin{proposition}\label{prop:asymptotic}For $\fa\subseteq\cO$, let $\omega(\fa)$ be the number of distinct rational primes dividing the norm $\|\fa\|$. Then $\cj(\fa)\ll \omega(\fa)^{4.27}$, where the implied constant does not depend on $\cO$.
\end{proposition}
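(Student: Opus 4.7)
My plan is to reduce the statement to an interval-sieving problem over $\bZ$ and invoke a lower bound sieve of dimension two; the exponent $4.27$ will emerge as a constant slightly larger than the sifting limit $\beta_2 \approx 4.266$ of the dimension-$2$ Jurkat--Richert sieve. First I would carry out a local analysis: for each prime ideal $\fp \mid \fa$ with $\fp \cap \bZ = p\bZ$, describe the set $B_p \subset \bZ/p\bZ$ of residues of $j$ for which some prime of $\fa$ above $p$ divides $\alpha+j$. When $p$ is ramified or split, the map $\bZ \to \cO/\fp \cong \mathbb{F}_p$ is surjective, so each prime above $p$ dividing $\fa$ contributes one forbidden class; when $p$ is inert, the image of $\bZ \to \cO/\fp \cong \mathbb{F}_{p^2}$ is only $\mathbb{F}_p$, so $\fp=(p)$ contributes a single forbidden class only if the ``imaginary part'' of $\alpha$ vanishes mod $p$, and otherwise none. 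Hence $|B_p|\leq 2$ uniformly, with $|B_p|=2$ possible only when $p$ splits and both primes above it divide $\fa$. The hypothesis on $(2,\alpha)$ in Definition \ref{def:jacob} precisely eliminates the case $|B_2|=2$ when $2\mid\fa$, guaranteeing at least one allowed residue modulo $2$.

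Writing $r=\omega(\fa)$ and $P$ for the set of rational primes dividing $\|\fa\|$, the task becomes: find $j$ in the integer interval $A=[x,x+H]\cap\bZ$ with $j\pmod p\notin B_p$ for every $p\in P$, taking $H$ as small as possible. Since $|B_p|\leq 2$ for all $p$, this fits within a sifting problem of dimension at most $\kappa=2$. I would split $P=P_{\leq z}\sqcup P_{>z}$ at a cutoff $z$ to be chosen, apply the Jurkat--Richert dimension-$2$ lower bound sieve to $A$ with sifting primes $P_{\leq z}$, and handle $P_{>z}$ by a trivial union bound. The sieve yields
\[ S(A,z) \geq H \prod_{p\in P_{\leq z}}\bigl(1-|B_p|/p\bigr)\bigl(f_2(s)+o(1)\bigr), \]
with $s=\log D/\log z$ and $D\leq H$ the level of distribution (essentially trivial for $A$ an interval). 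The dimension-$2$ lower bound sieve function $f_2$ is positive for $s>\beta_2$, so choosing $H$ just above $z^{\beta_2}$ makes the right side positive. The $P_{>z}$ contribution excludes at most $2(rH/z+r)$ further integers from $A$, which is negligible once we pick, say, $z\asymp r$ and $H\asymp r^{4.27}$.

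The main obstacle is verifying the sieve hypotheses uniformly in $\fa$ and $\alpha$ without assuming $P_{\leq z}$ is an initial segment of the rational primes, and confirming that the resulting implied constant depends only on universal sieve-theoretic data rather than on $\cO$. Both are routine in modern sieve theory but must be executed carefully, since $|B_p|$ can equal $0$, $1$, or $2$ depending on $\fa$ and on the splitting behavior of $p$ (so in favorable cases a tighter dim-$1$ sieve would give a better exponent, but for uniformity we must accept dim-$2$). Once that is in place, the sifting limit $\beta_2<4.27$ produces the stated exponent with an absolute implied constant, independent of $\cO$.
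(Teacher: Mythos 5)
Your approach is essentially the paper's: reduce to sieving an integer interval with at most two forbidden residues per rational prime dividing $\|\fa\|$, apply a dimension-two lower bound sieve below a cutoff, and handle the primes above the cutoff by a trivial union bound. Two corrections, one of which matters. First, the value $\beta_2=4.2664\ldots$ is the Diamond--Halberstam--Richert sifting limit (the paper cites \cite{diamond}), not Jurkat--Richert. Second, and substantively, the cutoff $z\asymp r$ does not work: the union bound over $P_{>z}$ discards roughly $2rH/z\asymp H$ integers, while the sieve's lower bound is only $\gg H\prod_{p\in P_{\leq z}}(1-2/p)\gg H/\log^2 z$ in the worst case, so the discarded set can swallow the entire sifted set. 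You need $z$ of size about $r^{1+\varepsilon'}$ (the paper's choice), so that the union bound contributes $O(H/r^{\varepsilon'})$, which is dominated by $H/\log^2 H$; this is exactly why the final exponent is $(1+\varepsilon')(\beta_2+\varepsilon)<4.27$ rather than $\beta_2+\varepsilon$ itself --- with $z\asymp r$ the gap between $4.2664$ and $4.27$ would be doing no work. Your local analysis of $B_p$, including the role of the hypothesis on $(2,\alpha)$, is correct and matches the paper, and your concern about $P_{\leq z}$ not being an initial segment of the primes is resolved as the paper does, by sieving (vacuously if necessary) all primes up to the cutoff.
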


\begin{proof}In order to make $\alpha+j$ and $\fa$ from Definition \ref{def:jacob} coprime, at most two congruence classes (for split primes) must be avoided by $j$ for each of the $\omega(\fa)$ rational primes dividing $\|\fa\|$. In dimension two, the Diamond-Halberstam-Richert lower bound sifting limit is $\beta_2=4.2664...$ \cite{diamond}. That is, for any $\varepsilon > 0$ there are constants $c,c'>0$ such that any interval of length $x> cp_n^{\beta_2+\varepsilon}$ contains at least $c'x/\log^2x$ integers avoiding any single congruence class $\text{mod}\,2$ and any pairs of congruence classes $\text{mod}\,3,5,...,p_n$ for any $n$. 

Let $0<\varepsilon'<4.27/\beta_2-1$ and fix any $\varepsilon > 0$ such that $(1+\varepsilon')(\beta_2+\varepsilon)<4.27$. Let $p_n$ be the largest prime less than $\omega(\fa)^{1+\varepsilon'}$, so $p_n^{\beta_2+\varepsilon}<\omega(\fa)^{4.27}$. In particular, if the primes dividing $\|\fa\|$ are all among the first $n$, then $\cj(\fa)< c\omega(\fa)^{4.27}$ as per the previous paragraph. We now consider the possibility that $p>p_n$ divides $\|\fa\|$.

The number of integers in an interval of length $x$ which belong to one of two congruence classes $\text{mod}\,p$ is at most $2+2x/p$. Thus the subset of such an interval that does not avoid two congruence classes per prime $p>p_n$ dividing $\|\fa\|$ has cardinality at most $$\sum_{\substack{p\,|\,\|\fa\| \\ p>p_n}}\!\left(2+\frac{2x}{p}\right)< 2\omega(\fa)+\frac{2x\omega(\fa)}{p_{n+1}}<2\omega(\fa)+\frac{2x\omega(\fa)}{\omega(\fa)^{1+\varepsilon'}}<\frac{4x}{\omega(\fa)^{\varepsilon'}}.$$ The last inequality holds with $x>\omega(\fa)^{1+\varepsilon'}$. 

Altogether, we have shown that if $4x/\omega(\fa)^{\varepsilon'}<c'x/\log^2 x$, there is at least one integer avoiding all required congruence classes modulo the prime divisors of $\|\fa\|$ in each interval of size $x> cp_n^{\beta_2+\varepsilon}$. So set $x=c\hspace{0.05cm}\omega(\fa)^{4.27}$. Then $4/\omega(\fa)^{\varepsilon'}<c'/\log^2(c\hspace{0.05cm}\omega(\fa)^{4.27})$ provided $\omega(\fa)$ is sufficiently large.\end{proof}

\begin{lemma}\label{lem:jacob}Suppose $\fa\subseteq\cO$ is coprime to $(\lambda,\mu,\lambda',\mu')\subseteq\cO$ and contains $\lambda\mu'-\mu\lambda'$, and that $(2,\lambda',\mu')=\cO$ or $(2,\lambda+\lambda',\mu+\mu')=\cO$ if $2$ splits and divides $\fa$. Then every closed interval of length $\cj(\fa)$ contains $j\in\bZ$ such that $(\fa,j\lambda+\lambda',j\mu+\mu')=\cO$.\end{lemma}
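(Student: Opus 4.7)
The plan is to reduce the two-form coprimality condition on $j$ to the single-form condition $(\fa, \alpha + j) = \cO$ for a suitably chosen $\alpha \in \cO$, and then invoke Definition \ref{def:jacob}. For each prime $\fp \mid \fa$, the coprimality $\fa + (\lambda, \mu, \lambda', \mu') = \cO$ yields $(\lambda, \mu, \lambda', \mu') \not\subseteq \fp$, while $\lambda\mu' - \mu\lambda' \in \fa \subseteq \fp$ yields $\lambda\mu' \equiv \mu\lambda' \pmod{\fp}$. I will use these two facts to identify at most one residue class modulo $\fp$ that $j$ must avoid for $(\fp, j\lambda + \lambda', j\mu + \mu') = \cO$, then assemble $\alpha$ by CRT to encode all such constraints uniformly.

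The prime-by-prime analysis splits into three cases. (i) If $\mu \notin \fp$, then $\lambda' \equiv \lambda\mu'\mu^{-1} \pmod{\fp}$ makes both $j\lambda+\lambda'$ and $j\mu+\mu'$ multiples of $j + \mu'\mu^{-1}$, so the excluded residue is $-\mu'\mu^{-1}$. (ii) If $\mu \in \fp$ and $\lambda \notin \fp$, then $\mu'\lambda \in \fp$ forces $\mu' \in \fp$, so $j\mu+\mu' \in \fp$ always, and the only constraint is $j\lambda+\lambda' \notin \fp$, giving excluded residue $-\lambda'\lambda^{-1}$. (iii) If $\lambda, \mu \in \fp$, then $(\lambda', \mu') \not\subseteq \fp$ by coprimality, so $(\fp, j\lambda+\lambda', j\mu+\mu') = \cO$ holds for every $j$. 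By CRT choose $\alpha \in \cO$ congruent to $\mu'\mu^{-1}$, $\lambda'\lambda^{-1}$, or anything modulo $\fp$ in cases (i), (ii), (iii) respectively, at each $\fp \mid \fa$. Then $(\fp, \alpha + j) = \cO$ implies $(\fp, j\lambda+\lambda', j\mu+\mu') = \cO$ at every $\fp \mid \fa$, so $(\fa, \alpha + j) = \cO$ implies $(\fa, j\lambda+\lambda', j\mu+\mu') = \cO$.

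The remaining obligation is the side constraint of Definition \ref{def:jacob}: when $2$ splits in $\cO$ and $2 \mid \fa$, the ideal $(2, \alpha)$ must not be a split prime, i.e., $(2, \alpha) \in \{\cO, (2)\}$. A line of algebra (using $\lambda' \equiv \lambda\mu'\mu^{-1}$ in case (i) and $\mu' \in \fp$ in case (ii)) shows that for $\fp \in \{\fp_2, \overline{\fp_2}\}$ dividing $\fa$, $\alpha \equiv 0 \pmod{\fp}$ iff $(\lambda', \mu') \subseteq \fp$. Thus hypothesis Case 1, $(2, \lambda', \mu') = \cO$, forces $\alpha \not\equiv 0 \pmod{\fp}$ at every such $\fp$; at primes over $2$ that fall in case (iii) or do not divide $\fa$, CRT freedom lets us also choose $\alpha \not\equiv 0 \pmod{\fp}$, so overall $(2, \alpha) = \cO$. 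Under hypothesis Case 2, the substitution $j = j' + 1$ shifts the target interval by $-1$ (preserving length $\cj(\fa)$), replaces $(\lambda', \mu')$ by $(\lambda+\lambda', \mu+\mu')$ throughout (preserving coprimality, as $(\lambda, \mu, \lambda+\lambda', \mu+\mu') = (\lambda, \mu, \lambda', \mu')$, and the determinant, as $\lambda(\mu+\mu') - \mu(\lambda+\lambda') = \lambda\mu' - \mu\lambda'$), and converts Case 2 into Case 1 of the shifted problem, reducing to what we just handled. Applying Definition \ref{def:jacob} to $\alpha$ then produces $j \in \bZ$ in every closed interval of length $\cj(\fa)$ with $(\fa, \alpha + j) = \cO$, hence with the desired property. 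The chief obstacle is the split-$2$ bookkeeping, particularly the realization that hypothesis Case 2 is absorbed not by a new construction of $\alpha$ but by the one-step shift $j \mapsto j + 1$ that recycles the Case 1 argument.
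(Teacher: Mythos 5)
Your proposal is correct and follows essentially the same route as the paper: reduce to Definition \ref{def:jacob} by choosing $\alpha$ prime-by-prime via CRT (your residues $\mu'\mu^{-1}$, $\lambda'\lambda^{-1}$ agree modulo $\fp$ with the paper's $\lambda^{-1}\lambda'$, $\mu^{-1}\mu'$ thanks to $\lambda\mu'\equiv\mu\lambda'$), then check the split-$2$ side condition. The only cosmetic difference is in that last check: the paper proves the converse implication and argues by contradiction that $(2,\alpha)$ split prime would force $(2,j\lambda+\lambda',j\mu+\mu')$ to be a split prime for all $j$ (violating the hypothesis at $j=0$ or $j=1$), whereas you verify $(2,\alpha)=\cO$ directly under the first alternative and reduce the second to it by the shift $j\mapsto j+1$.
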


\begin{proof}Since $(\fa,\lambda,\mu,\lambda',\mu')=\cO$, only a prime ideal that divides $\fa$ but not $(\lambda,\mu)$ could potentially divide $(\fa,j\lambda+\lambda',j\mu+\mu')$ for some $j$. Given such a prime $\fp$, choose $\alpha\in\cO$ with $\alpha\equiv\lambda^{-1}\lambda'\,\text{mod}\,\fp$ if $\lambda\not\in\fp$ and $\alpha\equiv\mu^{-1}\mu'\,\text{mod}\,\fp$ if $\lambda\in\fp$. With this choice of $\alpha$, $(\fp,\alpha+j)=\cO$ implies $(\fp,j\lambda+\lambda',j\mu+\mu')=\cO$. The converse is also true: If $(\fp,\alpha+j)=\fp$ and $\lambda\not\in\fp$, for example, the choice of $\alpha$ gives $j\lambda+\lambda'\in\fp$. But then $\lambda(j\mu+\mu')-\mu(j\lambda+\lambda')=\lambda\mu'-\mu\lambda'\in\fa\subseteq\fp$ shows that $j\mu+\mu'\in\fp$, and thus $(\fp,j\lambda+\lambda',j\mu+\mu')=\fp$. The same argument works if $(\fp,\alpha+j)=\fp$ and $\mu\not\in\fp$.

If $2$ splits and $\alpha\,\text{mod}\,2$ has not already been fully determined in the previous paragraph, we ask that $(2,\alpha)$ is not prime in accordance with Definition \ref{def:jacob}. If $\alpha\,\text{mod}\,2$ is determined, this constraint is already met. Indeed, if $(2,\alpha)$ is a split prime then so is $(2,\alpha+j)$ for any $j\in\bZ$. But the previous paragraph showed that the same is then true for $(2,j\lambda+\lambda',j\mu+\mu')$, violating our hypothesis. Thus Definition \ref{def:jacob} gives the desired $j$.\end{proof} 

The hypothesis $\lambda\mu'-\mu\lambda'\in\fa$ loses no generality since $(j\lambda+\lambda',j\mu+\mu')$ always contains $\lambda\mu'-\mu\lambda'$. That is, prime ideals that do not contain $\lambda\mu'-\mu\lambda'$ are avoided by $(j\lambda+\lambda',j\mu+\mu')$ automatically.

From the start of this section, recall that it suffices for our purpose if $j\lambda+\lambda'$ and $j\mu+\mu'$ generate a principal ideal---they need not be coprime. This is why the function introduced below only sieves non-principal primes.

\begin{notation}\label{not:J}For $x>0$ let $\cJ(x)$ be the maximum obtained by $\cj(\fa)$ over $\fa\subset\cO$ that have no principal prime divisors and satisfy $\sqrt{\|\fa\|}<x$.\end{notation}

The asymptotic bound on $\cJ$ resulting from Proposition \ref{prop:asymptotic} can be found in Corollary \ref{cor:asymptotic}. An explicit bound on $\cj$ or $\cJ$ can also be obtained with slight modification to the argument made by Stevens \cite{stevens} for Jacobsthal's function, but it is too large to be computationally useful.

For the purpose of bounding Swan's number and computing Bianchi polyhedra, values of $\cJ$ can just be calculated to make the upper bound in Theorem \ref{thm:up} explicit. This requires polynomial time, whereas an algorithm to compute $\cB$ like Bianchi.gp \cite{bianchigp} has, according to Rahm, complexity at least $O(\text{exp}(|\Delta|^{3/2}))$. Even without the use of sieves to evaluate $\cJ$ when making Figure \ref{fig:6}, the time required was negligible.

\begin{lemma}\label{lem:low}Let $\delta\in\cO$ have maximal magnitude among proper divisors of $\Delta$. If $K$ has nontrivial class group, then $\cJ(2|\delta|)\geq 3$. If, additionally, $2$ splits and there is a non-principal prime of norm strictly between $2$ and $|\delta|$, then $\cJ(2|\delta|)\geq 6$.\end{lemma}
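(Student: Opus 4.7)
My plan is to prove both bounds by exhibiting, in each case, an explicit ideal $\fa\subset\cO$ satisfying the hypotheses of Notation~\ref{not:J} (no principal prime divisors and $\sqrt{\|\fa\|}<2|\delta|$) together with an admissible $\alpha\in\cO$ for which a long run of consecutive integers $j$ all satisfy $(\fa,\alpha+j)\neq\cO$. A block of $k$ consecutive invalid $j$'s forces a gap of at least $k+1$ between valid $j$'s, so $\cj(\fa)\geq k+1$ and thus $\cJ(2|\delta|)\geq k+1$.

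For the first claim I need only two consecutive invalid $j$'s. I would take $\fa=\fp_1\fp_2$ for two distinct coprime non-principal primes $\fp_1,\fp_2$, then use CRT to produce $\alpha$ with $\alpha\in\fp_1$ and $\alpha+1\in\fp_2$, making $j=0,1$ both invalid. The existence of $\fp_1,\fp_2$ follows from the nontriviality of the class group: Minkowski yields a first non-principal prime of norm at most $(2/\pi)\sqrt{|\Delta|}<|\delta|$, and either its conjugate (when split) or a second non-principal prime over a distinct rational prime supplies $\fp_2$. If $2\mid\fa$, the constraint on $\alpha$ is satisfied by requiring $\alpha$ to be a unit modulo $2$; then $\alpha+1\in(2)\subseteq\fp_2\cap\overline{\fp_2}$ is automatic and $(2,\alpha)=\cO$ is not a split prime. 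The norm bound uses $|\delta|\geq\sqrt{|\Delta|}$ and Minkowski-size control on the $\fp_i$.

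For the second claim I need five consecutive invalid $j$'s. The central trick is to place $\alpha\in(2)$: this makes $(2,\alpha)=(2)$ (not a split prime, so the constraint holds) and automatically forces $\alpha,\alpha+2,\alpha+4\in(2)\subseteq\fp_2$, producing three of the five invalid $j$'s at no cost. It remains to place $\alpha+1$ and $\alpha+3$ into non-principal primes away from $2$. Let $\fq$ be the hypothesized non-principal prime of norm in $(2,|\delta|)$. In the split case ($\overline{\fq}\neq\fq$), take $\fa=(2)(\|\fq\|)=\fp_2\overline{\fp_2}\fq\overline{\fq}$ and impose by CRT the congruences $\alpha\equiv -1\pmod{\fq}$ and $\alpha\equiv -3\pmod{\overline{\fq}}$. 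In the ramified case, substitute $\overline{\fq}$ by a second non-principal odd prime $\fq'$; such a $\fq'$ is furnished by the principal factorization $(\sqrt{\Delta})=\prod_i\fp_i$ over ramified primes, which (when $\Delta$ has multiple odd prime factors) forces non-principal ramified primes to appear in pairs. The norm check $\sqrt{\|\fa\|}=2\sqrt{\|\fq\|\|\fq'\|}<2|\delta|$ follows from $\|\fq\|<|\delta|$ and $\|\fq'\|\leq|\delta|$, using $|\delta|>\sqrt{|\Delta|}$ when $\Delta$ is composite.

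The main obstacle is navigating the $2$-constraint of Definition~\ref{def:jacob}, which forbids $(2,\alpha)\in\{\fp_2,\overline{\fp_2}\}$ whenever $2\mid\fa$; this blocks the most natural constructions in which $\alpha$ would lie in $\fp_2$ alone. The resolution in both parts is to keep $(2,\alpha)$ either equal to $\cO$ (take $\alpha$ a unit mod $2$) or equal to $(2)$ itself (take $\alpha\in(2)$); the latter choice is doubly useful in Part~2 because it supplies three of the five required invalid $j$'s for free. A secondary subtlety, particularly in the ramified sub-case of Part~2, is locating a second small non-principal prime $\fq'$, which is handled by the discriminant-factorization argument above.
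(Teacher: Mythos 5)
Your overall strategy is the paper's: exhibit an ideal $\fa$ with no principal prime divisors and $\sqrt{\|\fa\|}<2|\delta|$, plus an admissible $\alpha$ forcing a run of consecutive bad $j$'s. Your Part 2 is a correct variant (the paper keeps $(2)\nmid\fa$ by using only one prime over $2$, so the constraint on $\alpha$ is vacuous and $\alpha\in\fp_2$ is allowed; you instead put $(2)\mid\fa$ and $\alpha\in(2)$ --- both kill $j=0,\pm2$ for free --- though you should note that your $\fa$ needs \emph{both} primes over $2$ to be non-principal, which holds since an element of norm $2$ would force $|\Delta|\leq 8$). The problem is Part 1. Your sub-case ``$2\mid\fa$'' is not a repair but a genuine failure: with $\fa=\fp_1\fp_2=(2)$ and $2$ split, demanding that $\alpha$ be a unit mod $2$ contradicts your requirement $\alpha\in\fp_1$, and it makes $(\fa,\alpha+j)=\cO$ for every even $j$, so $j=0$ is valid and you have only one bad integer. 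In fact no admissible $\alpha$ works for $\fa=(2)$: Definition \ref{def:jacob} forces $\alpha\equiv(0,0)$ or $(1,1)$ modulo $2$, so the valid $j$'s recur with period $2$ and $\cj((2))=2<3$. This is not a vacuous case --- for $\Delta=-15$ your Minkowski step produces exactly the non-principal prime over $2$, so your recipe lands in it. The only fix is to guarantee the two primes are not both over $2$, which is precisely the caveat the paper inserts (``as long as $\fp$ and $\fq$ are not both of norm $2$'') and which its existence argument is built to satisfy (it starts from the ramified primes over the distinct prime factors of $\Delta$).

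That points to the second gap: the existence of a suitable pair $\fp_1,\fp_2$ is where the real work of this lemma lies, and you have waved at it. If the Minkowski-produced non-principal prime is ramified (it cannot be inert), its conjugate is itself, and ``a second non-principal prime over a distinct rational prime'' need not be available among the ramified primes: when $|\Delta|/4$ is prime, the only other ramified prime is $(\sqrt{\Delta}/2)$, which is principal. One must then produce a non-principal \emph{split} prime of controlled norm; Minkowski does this when $h>2$ (apply it to a class outside the subgroup generated by the ramified class), but for $h=2$ the paper has to check $\Delta=-20,-52,-148$ by hand. Your Part 2 ramified sub-case is essentially fine --- the relation $\prod_i[\fp_i]=[(\sqrt{\Delta})]=1$ does rule out exactly one non-principal ramified prime, even though ``appear in pairs'' overstates what an elementary abelian $2$-group gives you --- but Part 1 needs that same level of care before the lemma is proved.
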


\begin{proof}For the first claim, we will find distinct non-principal primes $\fp$ and $\fq$ with $\sqrt{\|\fp\fq\|}<2|\delta|$. In Definition \ref{def:jacob}'s notation, we then choose $\alpha\in\fp$ with $\alpha\equiv 1\,\text{mod}\,\fq$, and center an interval of length less than $3$ on $-1/2$ so that the only intergers $j$ it might contain are $0$ and $-1$. Since $(0+\alpha,\fp\fq)=\fp$ and $(-1+\alpha,\fp\fq)=\fq$, this would prove $\cj(\fp\fq)\geq 3$ as long as $\fp$ and $\fq$ are not both of norm $2$ as per Definition \ref{def:jacob}. 

The second claim is similar, but now neither $\fp$ nor $\fq$ can have norm $2$, and we will need $\sqrt{\|\fp_2\fp\fq\|}<2|\delta|$, where $\fp_2$ is a split prime over $2$. Then take $\alpha\in\fp_2$ with $\alpha\equiv -1\,\text{mod}\,\fp$ and $\alpha\equiv 1\,\text{mod}\,\fq$. Centering an interval of length less than $6$ on $0$ forces $j\in\{0,\pm 1,\pm 2\}$. We have $(j+\alpha,\fp_2\fp\fq)\subseteq\fp_2$ for $j\in\{0,\pm2\}$, and $(-1+\alpha,\fp_2\fp\fq)=\fq$ and $(1+\alpha,\fp_2\fp\fq)=\fp$. Thus $\cj(\fp_2\fp\fq)\geq 6$.

First suppose $|\Delta|$ has at least two prime divisors, say $p<q$, and let $\fp$ and $\fq$ be primes above them. Suppose further that $q\neq |\Delta|/4$, so $\fq$ is not principal. The first claim follows. For the second, if $2$ splits then $\|\fp\|,\|\fq\|\neq 2$ since $p$ and $q$ ramify.

If $|\Delta|$ is prime, there are no non-principal ramified primes. So a nontrivial class group along with Minkowski's bound gives us a non-principal split prime $\fp$ with norm at most $\sqrt{|\Delta|/3}$. Then $\fp$ and $\overline{\fp}$ serve as our two primes to prove the first claim. For the second claim, the lemma assumes we may take $\|\fp\|\neq 2$.

If $|\Delta|/4$ is prime, there is one non-principal ramified prime: $(2,1+\sqrt{\Delta}/2)$. Its ideal class has order two. In particular, if the class number of $K$ exceeds two, there is a nontrivial ideal class unaccounted for by powers of $(2,1+\sqrt{\Delta}/2)$. Combined with Minkowski's bound, this gives $\fp$ as in the previous paragraph. There are three outstanding cases with class number two and $|\Delta|/4$ prime: $\Delta = -20$, $-52$, and $148$. All have non-principal split primes with norm (much) smaller than $2|\delta|$.\end{proof}

\section{An upper bound for Swan's number}\label{sec:4}

The next two lemmas are used in the proof of Theorem \ref{thm:up} to produce $\lambda/\mu$ and $\lambda'/\mu'$ from Lemma \ref{lem:jacob} and the discussion at the start of Section \ref{sec:3}.

The first lemma refers to continued fraction convergents (or approximations). The floor function continued fraction algorithm begins with a point to be approximated, say $z=z_0\in\bR$. Then it proceeds by setting $z_{n+1}=1/(z_n-\lfloor z_n\rfloor)$ for $n\geq 0$. The convergents $p_n/q_n$ start with $p_{-1}=q_0=0$ and $q_{-1}=p_0=1$, and continue with $p_{n+1} = \lfloor z_n\rfloor p_n+p_{n-1}$ and $q_{n+1} = \lfloor z_n\rfloor q_n+q_{n-1}$. These variables satisfy the following.

\begin{lemma}\label{lem:cfrac}For $n\geq 1$, $\text{gcd}(q_{n-1},q_n)=1$, $|q_nz-p_n|=(-1)^n(p_n-q_nz)$, and $$|q_nz-p_n| = \frac{|q_{n-1}z-p_{n-1}|}{z_n}= \frac{1}{z_nq_n+q_{n-1}}\leq\frac{1}{q_{n+1}}.$$\end{lemma}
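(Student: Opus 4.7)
The plan is to derive all four assertions from two core identities: the determinant relation $p_nq_{n-1}-p_{n-1}q_n=(-1)^n$, and the ``substitution formula'' $z=(z_np_n+p_{n-1})/(z_nq_n+q_{n-1})$. Both come out of induction against the given recurrences, and everything in the lemma reduces to algebra on them.

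First I would establish the determinant identity by induction. A direct computation using $p_{n+1}=\lfloor z_n\rfloor p_n+p_{n-1}$ and $q_{n+1}=\lfloor z_n\rfloor q_n+q_{n-1}$ shows $p_{n+1}q_n-p_nq_{n+1}=-(p_nq_{n-1}-p_{n-1}q_n)$, and the base case $p_0q_{-1}-p_{-1}q_0=1$ yields $p_nq_{n-1}-p_{n-1}q_n=(-1)^n$. Since $\gcd(q_{n-1},q_n)$ divides the left side, it divides $1$, giving the first claim.

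Next I would induct to show $z=(z_np_n+p_{n-1})/(z_nq_n+q_{n-1})$. The base case $n=0$ holds from the initial values. For the inductive step, substitute $z_n=\lfloor z_n\rfloor+1/z_{n+1}$ and simplify using the recurrences for $p_{n+1}$ and $q_{n+1}$. Once this is in hand, a one-line manipulation gives
$$q_nz-p_n=\frac{q_np_{n-1}-p_nq_{n-1}}{z_nq_n+q_{n-1}}=\frac{(-1)^{n+1}}{z_nq_n+q_{n-1}}.$$
Because $z_nq_n+q_{n-1}>0$ for $n\geq 1$, this simultaneously proves the sign assertion $|q_nz-p_n|=(-1)^n(p_n-q_nz)$ and the middle equality $|q_nz-p_n|=1/(z_nq_n+q_{n-1})$.

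For the remaining equality $|q_nz-p_n|=|q_{n-1}z-p_{n-1}|/z_n$, I would note the clean identity
$$z_{n-1}q_{n-1}+q_{n-2}=\bigl(\lfloor z_{n-1}\rfloor+\tfrac{1}{z_n}\bigr)q_{n-1}+q_{n-2}=q_n+\frac{q_{n-1}}{z_n}=\frac{z_nq_n+q_{n-1}}{z_n},$$
which, after inverting, translates directly to the desired ratio. Finally, the inequality $1/(z_nq_n+q_{n-1})\leq 1/q_{n+1}$ is immediate from $z_n\geq\lfloor z_n\rfloor$ and $q_{n+1}=\lfloor z_n\rfloor q_n+q_{n-1}$. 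There is no real obstacle here beyond sign and index bookkeeping; the only place to slip is the parity in $(-1)^{n+1}$ versus $(-1)^n$, which must be tracked carefully when translating between $q_nz-p_n$ and $p_n-q_nz$.
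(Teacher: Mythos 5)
Your proof is correct and complete; the paper itself offers no argument for this lemma, deferring to Section 1.3 of Hensley's book, and your derivation (determinant identity plus the substitution formula $z=(z_np_n+p_{n-1})/(z_nq_n+q_{n-1})$, both by induction on the recurrences) is precisely the standard argument found there. All signs and indices check out, including the base cases with $p_{-1}=q_0=0$, $q_{-1}=p_0=1$.
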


\begin{proof}See Section 1.3 of \cite{hensley}, for example.\end{proof}

Regarding the next lemma, there may already be results related to variants of Gauss' circle problem that imply even stronger inequalities. For this reason, and because the arguments are far from new, the proof is only sketched. 

\begin{lemma}\label{lem:thue}If $r\geq 4\sqrt{|\Delta|}$, then for any $\zeta\in\bC$ there exist $\lambda,\mu\in\cO$ with $0<|\mu|<r$ and $|\mu\zeta-\lambda|<\sqrt{|\Delta|}/\sqrt{2}r$.\end{lemma}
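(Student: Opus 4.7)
The plan is to apply Minkowski's convex body theorem to a four-dimensional lattice. Regard $\bC^2$ as $\bR^4$ with the lattice $\cO\times\cO$, whose covolume is $(\sqrt{|\Delta|}/2)^2=|\Delta|/4$, and take the open, centrally symmetric, convex set
$$C=\left\{(\mu,\lambda)\in\bC^2\,:\,|\mu|<r,\ |\mu\zeta-\lambda|<\tfrac{\sqrt{|\Delta|}}{\sqrt{2}\,r}\right\}.$$

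First I would compute $\mathrm{vol}_{\bR^4}(C)$. The $\bC$-linear change of variables $(\mu,\lambda)\mapsto(\mu,\,\mu\zeta-\lambda)$ has $\bC$-determinant $-1$, so its real Jacobian determinant is $|-1|^2=1$. The image of $C$ under this map is the product of the open disk of radius $r$ with the open disk of radius $\sqrt{|\Delta|}/(\sqrt{2}\,r)$, giving
$$\mathrm{vol}(C)=\pi r^2\cdot\pi\cdot\frac{|\Delta|}{2r^2}=\frac{\pi^2}{2}|\Delta|.$$
Since $\pi^2/2>4$, this exceeds $2^4$ times the lattice covolume, and Minkowski's theorem produces a nonzero point $(\mu,\lambda)\in(\cO\times\cO)\cap C$.

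All that remains is to exclude $\mu=0$, which is exactly where the hypothesis $r\geq 4\sqrt{|\Delta|}$ enters: if $\mu=0$, then $|\lambda|<\sqrt{|\Delta|}/(\sqrt{2}\,r)\leq 1/(4\sqrt{2})<1$, forcing $\lambda=0$ since every nonzero element of $\cO$ has magnitude at least $1$; this contradicts nontriviality of the Minkowski point. There is no serious obstacle in this argument, though the volume inequality is tight: $\pi^2$ only modestly exceeds $8$, so essentially all of the available volume is being used. This slack (or lack thereof) is presumably why the author remarks that Gauss-circle-type refinements on lattice-point counts could sharpen the constants.
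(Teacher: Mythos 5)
Your proof is correct, but it takes a genuinely different route from the paper. The paper counts lattice points $\mu$ in a disc of radius $r/2$ via a Gauss-circle-type area estimate, then applies Dirichlet's box principle together with Thue's circle packing theorem to the residues $\mu\zeta \bmod \cO$: two of them must lie within $\sqrt{|\Delta|}/\sqrt{2}r$ of each other, and the difference of the corresponding pairs furnishes $(\lambda,\mu)$. There the hypothesis $r\geq 4\sqrt{|\Delta|}$ is what pushes the lattice-point count past the packing threshold, while $\mu\neq 0$ comes for free since $\mu=\mu_1-\mu_2$ with $\mu_1\neq\mu_2$. You instead make a single application of Minkowski's convex body theorem in $\bR^4$; your volume computation (change of variables with real Jacobian $|\!\det_{\bC}|^2=1$, giving $\pi^2|\Delta|/2 > 4|\Delta|$) and covolume $|\Delta|/4$ are right, and the hypothesis on $r$ is used only to rule out $\mu=0$, which you do correctly since the minimal nonzero magnitude in $\cO$ is $1$. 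Your argument is shorter and avoids both the delicate explicit area formula and the appeal to Thue; what the paper's difference-set formulation buys is the automatic nonvanishing of $\mu$ and, via the hexagonal packing density $\pi/\sqrt{12}$, somewhat more room than Minkowski's factor $2^4$ affords --- which is consistent with the author's remark that sharper lattice-point results would improve the constants. Either proof supports the use made of the lemma in Theorem \ref{thm:up}.
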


\begin{proof}Consider the circle $C$ of radius $r/2$ centered at $0\in\bC$. Trace the center of a rectangle $R$ of width $1$ and height $\sqrt{|\Delta|}/2$ as it rotates around $0$ so that at least one corner always touches $C$ and no corner lies outside $C$. The area inside the traced shape is $$A =\frac{\pi r^2}{4}+\frac{\sqrt{|\Delta|}}{2}-\frac{\sqrt{r^2-1}}{2}-\frac{\sqrt{4|\Delta|r^2-\Delta^2}}{8}-\frac{r^2}{2}\arcsin\!\left(\frac{1}{r}\right)-\frac{r^2}{2}\arcsin\!\left(\frac{\sqrt{|\Delta|}}{2r}\right).$$ To verify this, observe that in the first quadrant of the plane, where it is the top-right corner of $R$ that touches $C$, we have traced over a shifted (by $-1/2-\sqrt{\Delta}/4$) copy of $C$. The portion of this shifted circle that lies in the first quadrant can be computed with standard geometric formulas to find $A/4$. 

When $R$ is centered on some $\mu\in\cO$, it intersects our traced shape if and only if $\mu\in C$. Since copies of $R$ centered on integers tile the plane and each has area $\sqrt{|\Delta|}/2$, we have $|\cO\cap C| > 2A/\sqrt{|\Delta|}$ (Gauss' argument \cite{clark}). Using Dirichlet's box principle and Thue's circle packing theorem \cite{thue}, when $|\cO\cap C|$ exceeds $2r^2/\sqrt{3|\Delta|}$ there are distinct $\mu_1,\mu_2\in \cO\cap C$ such that $|(\mu_1\zeta-\lambda_1)-(\mu_2\zeta-\lambda_2)|<\sqrt{|\Delta|}/\sqrt{2}r$ for the appropriate choices of $\lambda_1$ and $\lambda_2$. Then $\lambda=\lambda_1-\lambda_2$ and $\mu=\mu_1-\mu_2$ is the desired pair. It can be checked that $2A/\sqrt{|\Delta|}>2r^2/\sqrt{3|\Delta|}$ for $r \geq 4\sqrt{|\Delta|}$.\end{proof} 

\begin{figure}[H]
    \centering
    \includegraphics[trim=1.6cm 0.9cm 1.6cm 0.7cm,clip,height=6cm]{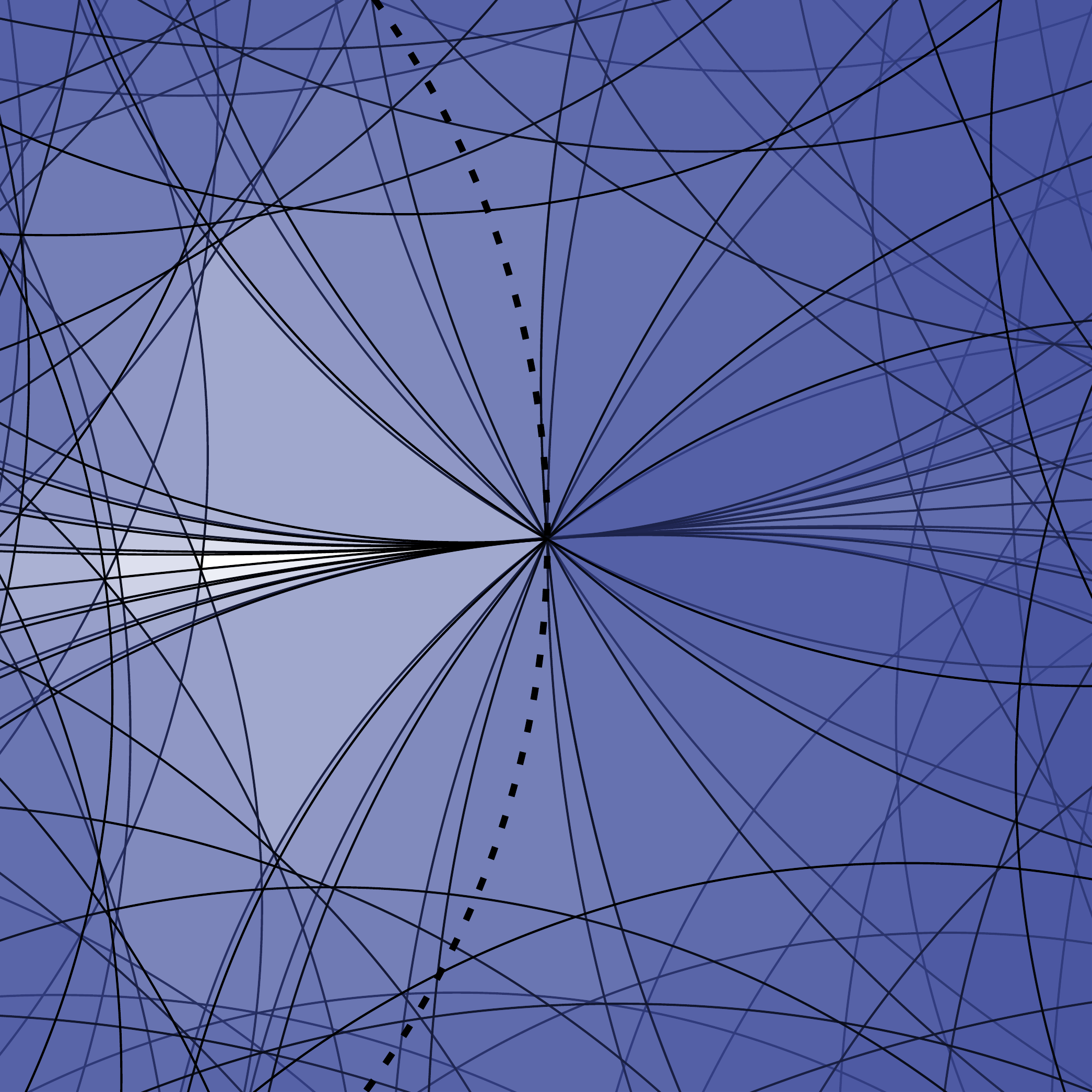}\hspace{0.28cm}\includegraphics[trim=3.5cm 0.7cm 3.5cm 0.7cm,clip,height=6cm]{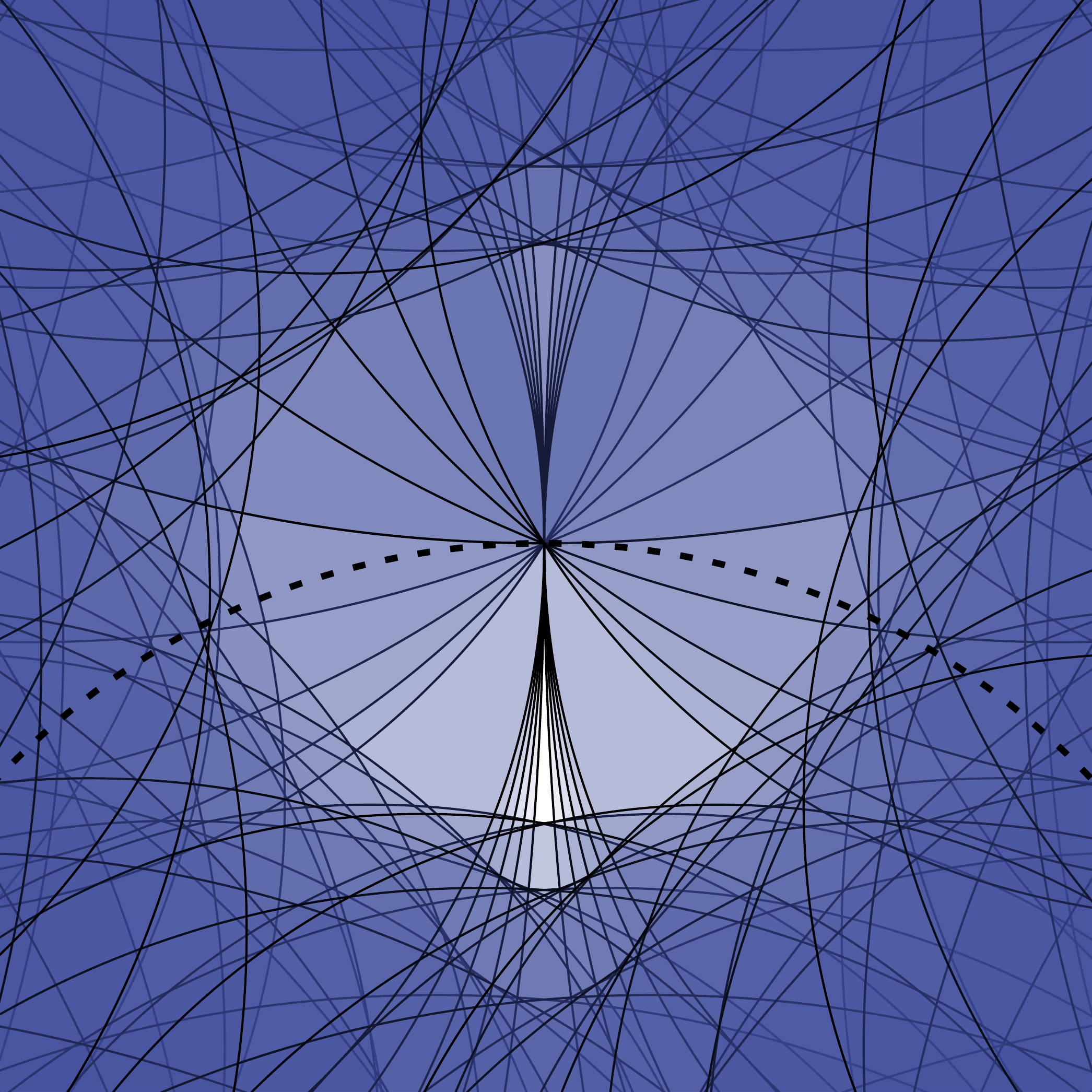}
    \captionsetup{width=0.8\textwidth}
    \caption{The relative angle of the thin, uncovered gap to its singular point determines how our proof finds the dashed circle.}
    \label{fig:4}
\end{figure}

The proof of Theorem \ref{thm:up} is divided into cases depending on the kinds of approximations that can be found near some nonsingular $\zeta\in K$. The end of each case is marked with the symbol $\diam$.

Let us outline the proof loosely, referencing notation and discussion from the start of Section \ref{sec:3}. Cases \hyperref[case:1]{1} and \hyperref[case:2]{2} find $\lambda/\mu$ and $\lambda'/\mu'$ when $\zeta$ does not have an approximation of especially high quality. But if, for example, $\lambda/\mu$ happens to be a singular point very near $\zeta$, then it is a challenge to find the second pair $\lambda',\mu'$ such that $|\mu'\zeta-\lambda'|$ and $|\mu'|$ are reasonably small---rational numbers (like $\lambda/\mu$ and the sought-after $\lambda'/\mu'$) repel one another, as usual in Diophantine approximation. The way to proceed depends on the argument of $\zeta-\lambda/\mu$.

Figure \ref{fig:4}'s images are centered on those $\lambda/\mu$ which are found in forthcoming proof when $\zeta$ lies in either of the thin, uncovered gaps. In the first image, $\lambda/\mu$ is the singular point $(-1+\sqrt{-143})/12$. The uncovered gap protrudes almost horizontally from $\lambda/\mu$, which makes this scenario amenable to Case \hyperref[case:3]{3}'s arguments. Here we use $\lambda'=1$ and $\mu'=0$, and it turns out that $(-4\lambda+\lambda',-4\mu+\mu') =\cO$. The dashed circle has center $(-4\lambda+\lambda')/(-4\mu+\mu')= (-3+2\sqrt{-143})/24$ and curvature $24$, which happens to be $\cS(\Gamma)$.

In the second image, $\lambda/\mu$ is the singular point $(3+\sqrt{-57})/6$. The uncovered gap protrudes vertically from $\lambda/\mu$, which means Case \hyperref[case:3]{3} is not applicable. The strategy given immediately following Case \hyperref[case:3]{3} in the proof finds $\lambda'=-8+\sqrt{-57}$ and $\mu' = 3+\sqrt{-57}$. Since $\lambda'/\mu'=\lambda/\mu$, we are in Case \hyperref[case:5]{5} rather than Case \hyperref[case:4]{4}. Under Case \hyperref[case:5]{5}'s direction we find that $(1-2\lambda+4\lambda',-2\mu+4\mu') =\cO$. The dashed circle has center $(1-2\lambda+4\lambda')/(-2\mu+4\mu')= (-37+2\sqrt{-57})/4\sqrt{-57}$ and curvature $\sqrt{912}$, which happens to be $\cS(\Gamma)$.

\begin{theorem}\label{thm:up}If $\delta\in\cO$ has maximal magnitude among proper divisors of $\Delta$ and $J\in\bZ$ is such that $\cJ(2\max(|\delta|,J\!\sqrt{|\Delta|}))\leq J$, then $\cS(\Gamma)<14J\max(|\delta|,J\!\sqrt{|\Delta|})$.\end{theorem}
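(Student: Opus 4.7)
Assume for contradiction that coprime $(\lambda_0,\mu_0)$ yield $|\mu_0|=\cS(\Gamma)\geq 14J\max(|\delta|,J\sqrt{|\Delta|})$, and set $\zeta=\lambda_0/\mu_0$. The plan is to produce coprime $(\tilde\lambda,\tilde\mu)$ satisfying the ball-containment inequality $|\tilde\mu\zeta-\tilde\lambda|+|\tilde\mu|/|\mu_0|<1$, which forces every point of $S_\zeta$ strictly below $S_{\tilde\lambda/\tilde\mu}$ and hence contradicts $S_\zeta$ contributing a face of $\cB$. Following the outline at the start of Section \ref{sec:3}, I would assemble $(\tilde\lambda,\tilde\mu)=((j\lambda+\lambda')/\nu,(j\mu+\mu')/\nu)$ from two approximations of $\zeta$ and an integer $j$, where $(\nu)=(j\lambda+\lambda',j\mu+\mu')$ is forced to be principal via Lemma \ref{lem:jacob}.

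\textbf{Finding the approximations.} The first approximation $(\lambda,\mu)$ comes from Lemma \ref{lem:thue} with a radius $r$ balancing $|\mu|<r$ against $|\mu\zeta-\lambda|<\sqrt{|\Delta|}/(\sqrt{2}r)$. The second $(\lambda',\mu')$ requires a case split depending on how closely $\zeta$ approaches a singular point. In the generic situation (Cases 1--2) a second application of Lemma \ref{lem:thue} at a different radius provides $(\lambda',\mu')$ of comparable quality. When $\zeta$ lies in a thin uncovered gap around a singular point, Thue-style approximations no longer give enough independence, and one instead takes either the trivial $(\lambda',\mu')=(1,0)$ (Case 3, when the gap is roughly horizontal as in the first image of Figure \ref{fig:4}) or constructs $(\lambda',\mu')$ by hand from $\sqrt{\Delta}$-involving generators of the singular ideal class (Cases 4--5, with a further split on whether $\lambda'/\mu'$ coincides with $\lambda/\mu$ as in the second image of Figure \ref{fig:4}). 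The continued fraction material of Lemma \ref{lem:cfrac} may be invoked to confirm optimality of the convergents appearing in these singular constructions.

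\textbf{Sieving to a principal ideal.} In each case let $\fa$ be the largest divisor of $(\lambda\mu'-\mu\lambda')$ coprime to $(\lambda,\mu,\lambda',\mu')$ whose prime factors are all non-principal. Bounding $|\lambda\mu'-\mu\lambda'|\leq|\mu'||\mu\zeta-\lambda|+|\mu||\mu'\zeta-\lambda'|\leq 2\max(|\delta|,J\sqrt{|\Delta|})$ from the approximation data yields $\sqrt{\|\fa\|}<2\max(|\delta|,J\sqrt{|\Delta|})$, so the hypothesis $\cJ(2\max(|\delta|,J\sqrt{|\Delta|}))\leq J$ and Lemma \ref{lem:jacob} supply $j$ with $|j|\leq J/2$ such that $(\fa,j\lambda+\lambda',j\mu+\mu')=\cO$. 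Since $(j\lambda+\lambda',j\mu+\mu')$ divides $(\lambda\mu'-\mu\lambda')$ and is coprime to $\fa$, every prime divisor is principal, so the ideal itself is principal. Dividing by a generator $\nu$ produces $(\tilde\lambda,\tilde\mu)$ with $|\tilde\mu|\leq J|\mu|/2+|\mu'|$ and $|\tilde\mu\zeta-\tilde\lambda|\leq J|\mu\zeta-\lambda|/2+|\mu'\zeta-\lambda'|$; tuning $r$ so that the two sides of each inequality balance makes the covering inequality hold precisely when $|\mu_0|>14J\max(|\delta|,J\sqrt{|\Delta|})$, giving the desired contradiction.

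\textbf{Main obstacle.} The chief difficulty is the case analysis near singular points: the trivial $(\lambda',\mu')=(1,0)$ works only when the uncovered gap protrudes roughly horizontally from $\lambda/\mu$, and otherwise one must exhibit $(\lambda',\mu')$ with the correct $\sqrt{\Delta}$-alignment while also satisfying both Lemma \ref{lem:jacob}'s approximation hypothesis and its split-$2$ side condition (either $(2,\lambda',\mu')=\cO$ or $(2,\lambda+\lambda',\mu+\mu')=\cO$). Tracking these parities may force perturbing one of the approximations, and the factor of $14$ absorbs the slack from this bookkeeping along with the constants in Lemmas \ref{lem:thue} and \ref{lem:cfrac}.
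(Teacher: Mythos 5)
Your skeleton matches the paper's at the level of Section \ref{sec:3}'s outline (two approximations, a Jacobsthal sieve via Lemma \ref{lem:jacob}, a case split near singular points), but two load-bearing mechanisms are missing or wrong. First, in your ``generic'' Cases 1--2 you propose obtaining $(\lambda',\mu')$ by ``a second application of Lemma \ref{lem:thue} at a different radius.'' That cannot work: Lemma \ref{lem:jacob} only sieves non-principal primes lying in an ideal $\fa$ \emph{coprime to} $(\lambda,\mu,\lambda',\mu')$, so any non-principal prime dividing all four of $\lambda,\mu,\lambda',\mu'$ divides $(j\lambda+\lambda',j\mu+\mu')$ for every $j$ and can never be removed. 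A second Thue-type approximation gives no control whatsoever over the prime divisors of $(\lambda',\mu')$, hence none over $(\lambda,\mu,\lambda',\mu')$. The paper's actual device is to take the \emph{real} floor-function continued fraction expansion of $2\Im(\zeta)/\sqrt{|\Delta|}$ and set $\mu'$ equal to a rational integer $j'q_n+q_{n+1}$ chosen (by a preliminary application of the sieve) to be coprime to the non-principal part of $\mu$; since $\gcd(q_n,q_{n+1})=1$, this forces $(\mu,\mu')$, and hence $(\lambda,\mu,\lambda',\mu')$, to have only principal prime divisors. This is the central difficulty of the theorem, and your proposal does not supply a substitute for it. (A smaller error in the same part: the norm bound is $\sqrt{\|\fa\|}\leq|\lambda\mu'-\mu\lambda'|/\sqrt{\|(\lambda,\mu)\|}$, not $|\lambda\mu'-\mu\lambda'|$ itself; without dividing by the non-principal part the constant $14$ would not survive.)

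Second, your endgame --- one integer $j$, one principal pair $(\tilde\lambda,\tilde\mu)$, one covering inequality --- breaks down exactly in the situation you flag as the ``further split on whether $\lambda'/\mu'$ coincides with $\lambda/\mu$.'' When $\lambda\mu'=\mu\lambda'$ (the paper's Case 5 and the discussion after it), the combination $(j\lambda+\lambda')/(j\mu+\mu')$ equals $\lambda/\mu$ for every $j$, so it never moves off the singular point and no single hemisphere produced by your recipe covers $S_\zeta$. The paper instead introduces $\alpha,\beta$ with $\alpha\mu-\beta\lambda$ of minimal nonzero norm in $(\lambda,\mu)$, forms the two-parameter family $\lambda_{j,j'}=\alpha+j\lambda+j'\lambda'$, $\mu_{j,j'}=\beta+j\mu+j'\mu'$, and covers $S_\zeta$ \emph{pointwise}: for each point $\xi+\lambda/\mu$ of the projection of $S_\zeta$ it constructs a disc $D$ (of radius $\min(1/2r,\,|\alpha\mu-\beta\lambda|/2\mu|\xi|)$) and shows that landing $\mu_{j,j'}$ in $D$ while keeping $(\lambda_{j,j'},\mu_{j,j'})$ principal suffices, then counts lattice points to show such $(j,j')$ exist. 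Nothing in your proposal anticipates needing a family of hemispheres rather than a single one, and ``tuning $r$ so the inequalities balance'' cannot absorb this; it is a separate geometric argument, not a constant-tracking issue.
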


\begin{proof}The claim is checked with data from \cite{aranes} or \cite{rahm} for the eight principal ideal domains. For other rings, Lemma \ref{lem:low} gives $J\geq 3$.

Let $d=\max(|\delta|,J\!\sqrt{|\Delta|})$. Fix $\zeta\in K$ with $(\zeta,1)$ principal such that $S_{\zeta}$ has curvature at least $14Jd$. We will show that $S_{\zeta}$ does not contribute a face of $\cB$. 

Using $r=2\sqrt{2}d$ in Lemma \ref{lem:thue} gives $\lambda,\mu\in\cO$ with \begin{equation}0<|\mu|<2\sqrt{2}d\hspace{0.5cm}\text{and}\hspace{0.5cm}|\mu\zeta-\lambda|<\frac{\sqrt{|\Delta|}}{4d}.\label{eq:4}\end{equation} Reduce $(\lambda,\mu)$ if possible so that it has minimal norm among nonzero integral ideals in its class. If $(\lambda,\mu)=\cO$, we claim $S_{\lambda/\mu}$ covers $S_{\zeta}$ and we are done. Indeed, the radius of $S_{\lambda/\mu}$ is $1/|\mu|$, so it suffices to check that $1/|\mu|-|\zeta-\lambda/\mu|$ is at least the radius of $S_{\zeta}$. The latter is bounded above by $1/14Jd$. We rearrange the desired inequality as \begin{equation}0<\frac{|\mu|}{1-|\mu\zeta-\lambda|}\leq 14Jd,\label{eq:5}\end{equation} which follows from (\ref{eq:4}). So suppose $(\lambda,\mu)\neq\cO$.

Consider the floor function continued fraction expansion of $2\Im(\zeta)/\sqrt{|\Delta|}$. Let $p_n/q_n$ be the first convergent satisfying \begin{equation}\left|\frac{2\Im(\zeta)q_n}{\sqrt{|\Delta|}}-p_n\right|<\frac{1}{d}.\label{eq:6}\end{equation} Then Lemma \ref{lem:cfrac} (with index decreased by one) gives the first inequality below and minimality of $n$ gives the second: \begin{equation}q_n < \left|\frac{2\Im(\zeta)q_{n-1}}{\sqrt{|\Delta|}}-p_{n-1}\right|^{-1}\leq d.\label{eq:7}\end{equation}

\phantomsection\label{case:1}\noindent\textbf{Case 1:} $|2\Im(\zeta)q_n/\sqrt{|\Delta|}-p_n|\geq 1/2Jd$.\hspace{\parindent}If $(q_n,\mu)$ has only principal prime divisors, let $\mu'=q_n$ and $\lambda'\in\cO$ be nearest to $\mu'\zeta$. If a non-principal prime $\fp$ divides $(q_n,\mu)$, we aim to define $\mu'$ as a linear combination of $q_n$ and $q_{n+1}$, where $p_{n+1}/q_{n+1}$ is the next continued fraction convergent of $2\Im(\zeta)/\!\sqrt{|\Delta|}$. Let $\fa$ have maximal norm among ideals containing $\mu$ that have no principal prime divisors and are coprime to $(q_n)$. In particular, $\fp\nmid\fa$. Choose any integer $\alpha\equiv q_n^{-1}q_{n+1}\,\text{mod}\,\fa$. If needed, we can take $\alpha\in\bZ$ to meet Definition \ref{def:jacob}'s constraint when $2\,|\,\fa$. By definition, every interval of length $\cj(\fa)$ contains $j'\in\bZ$ satisfying $(\fa,\alpha+j')=\cO$. For such a $j'$, $(\fa,j'q_n+q_{n+1})=\cO$ by choice of $\fa$ and $\alpha$. Since $\text{gcd}(q_n,q_{n+1})=1$, this in turn shows that $(\mu,j'q_n+q_{n+1})$ has only principal prime divisors. Finally, using (\ref{eq:4}) for the third inequality below, we have $$\cj(\fa)\leq\cJ(\sqrt{\|\fa\|})\leq\cJ\!\bigg(\frac{|\mu|}{\sqrt{\|\fp\|}}\bigg)\leq\cJ\!\bigg(\frac{2\sqrt{2}d}{\sqrt{2}}\bigg)\leq J.$$ 

The interval of length $J$ from which we take $j'$ is centered at $1/z_{n+1}$, where $z_{n+1}$ is from our continued fraction expansion of $2\Im(\zeta)/\!\sqrt{|\Delta|}$. That is, we choose $j'$ so that $|j'-1/z_{n+1}|\leq J/2$. Let $\mu'=j' q_n+q_{n+1}$ and let $\lambda'\in\cO$ be nearest to $\mu'\zeta$. Since one of $p_n/q_n$ and $p_{n+1}/q_{n+1}$ is an over-approximation of $2\Im(\zeta)/\!\sqrt{|\Delta|}$ and the other is an under-approximation by Lemma \ref{lem:cfrac}, we have $$|\Im(\mu'\zeta-\lambda')|=\left|j'\bigg|q_n\Im(\zeta)-\frac{p_n\sqrt{|\Delta|}}{2}\bigg|-\bigg|q_{n+1}\Im(\zeta)-\frac{p_{n+1}\sqrt{|\Delta|}}{2}\bigg|\right|=$$ \begin{equation}\left|j'-\frac{1}{z_{n+1}}\right|\cdot\bigg|q_n\Im(\zeta)-\frac{p_n\sqrt{|\Delta|}}{2}\bigg|<\frac{J}{2}\cdot\frac{\sqrt{|\Delta|}}{2d}\leq\frac{1}{4}.\label{eq:8}\end{equation} Note that an even stronger upper bound on $|\Im(\mu'\zeta-\lambda')|$ is given by (\ref{eq:6}) if we took $\mu'=q_n$ and $\Im(\lambda')=p_n\sqrt{|\Delta|}/2$ at the start of Case \hyperref[case:1]{1}.

Increasing the index by one in Lemma \ref{lem:cfrac} shows $|2q_n\Im(\zeta)/\!\sqrt{|\Delta|}-p_n|/z_{n+1}=1/(z_{n+1}q_{n+1}+q_n)$. In particular, $$q_{n+1}=\frac{1}{|2q_n\Im(\zeta)/\sqrt{|\Delta|}-p_n|}-\frac{q_n}{z_{n+1}}.$$ Thus from (\ref{eq:7}) and the defining inequality of Case \hyperref[case:1]{1}, we have \begin{equation}|\mu'|=\left|\bigg(j'-\frac{1}{z_{n+1}}\bigg)q_n+\frac{1}{|2q_n\Im(\zeta)/\!\sqrt{|\Delta|}-p_n|}\right|\leq \frac{Jd}{2}+2Jd=\frac{5Jd}{2}.\label{eq:9}\end{equation}

Since $(\lambda,\mu)$ is not principal, neither is $\mu/(\lambda,\mu)$. This means $\mu/(\lambda,\mu)$ has a non-principal prime divisor, which does not divide $\mu'$ by choice of $j'$. Thus $\lambda\mu'\neq\mu\lambda'$. Next, if $2$ splits then $(2,\mu,\mu')=\cO$. So $\mu'\in\bZ$ implies either $(2,\mu')=\cO$ or $(2,\mu+\mu')=\cO$, thereby meeting the last of Lemma \ref{lem:jacob}'s hypotheses. From (\ref{eq:4}), (\ref{eq:8}), and (\ref{eq:9}) along with $|\Re(\mu'\zeta-\lambda'|\leq1/2$, we have $$|\lambda\mu'-\mu\lambda'|\leq |\mu'(\mu\zeta-\lambda)|+|\mu(\mu'\zeta-\lambda')|<\frac{5J\!\sqrt{|\Delta|}}{8}+2\sqrt{2}d\sqrt{\frac{1}{4}+\frac{1}{16}}<2\sqrt{2}d.$$ So if $\fa$ has maximal norm among ideals dividing $\lambda\mu'-\mu\lambda'$ that have no principal prime divisors and are coprime to $(\lambda,\mu)$, then $\cj(\fa)\leq\cJ(|\lambda\mu'-\mu\lambda'|/\!\sqrt{\|(\lambda,\mu)\|})\leq J$. Therefore Lemma \ref{lem:jacob} gives $j\leq J/2$ such that $(j\lambda+\lambda',j\mu+\mu')$ has only principal prime divisors. We claim the hemisphere corresponding to $(j\lambda+\lambda')/(j\mu+\mu')$ covers $S_{\zeta}$. Just as with (\ref{eq:5}), it suffices to verify that $$0<\frac{|j\mu+\mu'|}{1-|(j\mu+\mu')\zeta-(j\lambda+\lambda')|}\leq 14Jd.$$ Using (\ref{eq:4}), (\ref{eq:8}), and (\ref{eq:9}) again, the expression above is at most $$\frac{J|\mu|/2+|\mu'|}{1-J|(\mu\zeta-\lambda)|/2-|\mu'\zeta-\lambda'|}< \frac{\sqrt{2}Jd+5Jd/2}{1-1/8-\sqrt{1/4+1/16}}<14Jd.$$ Thus $S_{\zeta}$ does not contribute a face of $\cB$.\hfill$\diam$

\hspace{\baselineskip}

Henceforth assume Case \hyperref[case:1]{1} fails, so \begin{equation}\left|q_n\Im(\zeta)-\frac{p_n\sqrt{|\Delta|}}{2}\right|<\frac{\sqrt{|\Delta|}}{4Jd}.\label{eq:10}\end{equation}

\vspace{0.2cm}

\phantomsection\label{case:2}
\noindent\textbf{Case 2:} $q_n\geq \sqrt{|\Delta|}$.\hspace{\parindent}We proceed as in Case \hyperref[case:1]{1}, but with $p_{n-1}/q_{n-1}$ in place of $p_{n+1}/q_{n+1}$. The interval from which we choose $j'$ is centered at $$z_n-\frac{1}{q_n|2q_n\Im(\zeta)/\sqrt{|\Delta|}-p_n|}.$$ With $\mu'=j'q_n+q_{n-1}$ and $\lambda'\in\cO$ nearest to $\mu'\zeta$, the choice of $j'$ gives the penultimate inequality below, and (\ref{eq:10}) and Case \hyperref[case:2]{2}'s assumption give the final inequality: $$|\Im(\mu'\zeta-\lambda')|=\left|j\bigg|q_n\Im(\zeta)-\frac{p_n\sqrt{|\Delta|}}{2}\bigg|-\bigg|q_{n-1}\Im(\zeta)-\frac{p_{n-1}\sqrt{|\Delta|}}{2}\bigg|\right|=$$ $$\left|j-z_n\right|\cdot\bigg|q_n\Im(\zeta)-\frac{p_n\sqrt{|\Delta|}}{2}\bigg|\leq \frac{J}{2}\bigg|q_n\Im(\zeta)-\frac{p_n\sqrt{|\Delta|}}{2}\bigg|+\frac{\sqrt{|\Delta|}}{2q_n}<\frac{\sqrt{|\Delta|}}{8d}+\frac{1}{2}.$$ Since $J\geq 3$ by Lemma \ref{lem:low}, $\sqrt{|\Delta|}/d\leq 1/J\leq 1/3$, extending the chain of inequalities above to give $|\Im(\mu'\zeta-\lambda')|<13/24$. Next, equality of the first and third expressions in Lemma \ref{lem:cfrac} rearranges to $$q_{n-1}=\frac{1}{|2q_n\Im(\zeta)/\sqrt{|\Delta|}-p_n|}-q_nz_n.$$  So, using (\ref{eq:7}) for the final inequality below, $$|\mu'|=\left|\left(j-z_n\right)q_n+\frac{1}{|2q_n\Im(\zeta)/\sqrt{|\Delta|}-p_n|}\right|\leq \frac{J|q_n|}{2}\leq \frac{Jd}{2}.$$ These are replacements for (\ref{eq:8}) and (\ref{eq:9}) from Case \hyperref[case:1]{1}. The remainder of the proof is unchanged.\hfill$\diam$

\vspace{\baselineskip}

Henceforth assume Case \hyperref[case:2]{2} fails, so $q_n<\sqrt{|\Delta|}$. By Dirichlet's box principle, there are $a,b\in\bN$ with $b< \sqrt{2}J$ such that $|2bq_n\Re(\zeta)-a|< \sqrt{2}/J$, and such that the parity of $a$ makes $(a+bp_n\sqrt{\Delta})/2\in\cO$. Discard prior values of $\mu$ and $\lambda$, and set $\mu=bq_n$ and $\lambda=(a+bp_n\sqrt{\Delta})/2$. Scale $a$ and $b$ by a common factor if necessary so that $(b,\lambda)=\cO$. To summarize approximation quality of $\lambda/\mu$: \begin{equation}\mu=bq_n<b\sqrt{|\Delta|}<\sqrt{2}J\sqrt{|\Delta|},\label{eq:11}\end{equation} from (\ref{eq:10}) we get\begin{equation}|\Im(\mu\zeta-\lambda)|<\frac{b\sqrt{|\Delta|}}{4Jd}<\frac{\sqrt{|\Delta|}}{2\sqrt{2}d},\label{eq:12}\end{equation}and $|\Re(\mu\zeta-\lambda)|<1/\sqrt{2}J$ by choice of $b$. Assume $(\lambda,\mu)$ has a non-principal prime divisor since otherwise we are done---the inequalities above show that (\ref{eq:5}) holds.

\vspace{\baselineskip}

\phantomsection\label{case:3}
\noindent\textbf{Case 3:} $|\Re(\mu\zeta-\lambda)|\geq \max(|\Im(\mu\zeta-\lambda)|,\mu/4Jd)$.\hspace{\parindent}This case will be appealed to again later in the proof. In addition to the inequality defining Case \hyperref[case:3]{3}, we will only use $|\Re(\mu\zeta-\lambda)|<1/\sqrt{2}J$ and $\cJ(|\mu|)\leq J$. 

Using $\lambda'=1$ and $\mu'=0$ in Lemma \ref{lem:jacob}, there exists $j$ in any interval of length $\cJ(|\lambda\mu'-\mu\lambda'|)=\cJ(|\mu|)\leq J$ so that $(1+j\lambda,j\mu)$ has only principal prime divisors. Because $|\Re(\mu\zeta-\lambda)|< 1/\sqrt{2}J$, there exists $x$ in any interval of length $1$ so that $$j=\frac{\Re(\mu\zeta-\lambda)}{|\mu\zeta-\lambda|^2}+\frac{x}{\sqrt{2}\Re(\mu\zeta-\lambda)}$$ for such a $j$. Let $x\in[-2/3,1/3]$. According to the inequality defining Case \hyperref[case:3]{3}, if $y=\Im(\mu\zeta-\lambda)/\Re(\mu\zeta-\lambda)$ then $y\in[-1,1]$. We have $$|j\mu|=\frac{\mu}{|\Re(\mu\zeta-\lambda)|}\left|\frac{1}{1+y^2}+\frac{x}{\sqrt{2}}\right|\leq 4Jd\left|\frac{1}{1+y^2}+\frac{x}{\sqrt{2}}\right|,$$ where the final inequality is also assumed by Case \hyperref[case:3]{3}. Next observe that $$\Re(j\mu\zeta-(1+j\lambda)) = j\Re(\mu\zeta-\lambda)-1= \frac{-y^2}{1+y^2}+\frac{x}{\sqrt{2}},$$ and
$$\Im(j\mu\zeta-(1+j\lambda))=j\Im(\mu\zeta-\lambda)=\frac{y}{1+y^2}+\frac{xy}{\sqrt{2}}.$$

The two expressions above verify that $|j\mu\zeta-(1+j\lambda)|$ is less than $1$ on the given domains for $x$ and $y$. (Local maxima occur at $(x,y)=(-2/3,\pm1),(1/3,\pm 1)$.) Now substitute all three expressions above into $$\frac{|j\mu|}{1-|j\mu\zeta-(1+j\lambda)|}$$ to verify that it is at most $14Jd$. (Local maxima occur at $(x,y)=(1/3,\pm1)$.)\hfill$\diam$

\vspace{\baselineskip}

Henceforth we assume Case \hyperref[case:3]{3} fails, so $|\Re(\mu\zeta-\lambda)|< \max(|\Im(\mu\zeta-\lambda)|,\mu/4Jd)$.\hspace{\parindent}Using (\ref{eq:11}) when $\max(|\Im(\mu\zeta-\lambda)|,\mu/4Jd)=\mu/4Jd$ (\ref{eq:12}) and otherwise, we have $|\Re(\mu\zeta-\lambda)|<b\sqrt{|\Delta|}/4Jd$. Thus \begin{equation}|\mu\zeta-\lambda|<\frac{b\sqrt{|\Delta|}}{2\sqrt{2}Jd}<\frac{\sqrt{|\Delta|}}{2d}.\label{eq:13}\end{equation} 

Recall that $\lambda=(a+bp_n\sqrt{|\Delta|})/2$ and $\mu=bq_n$. Since $\text{gcd}(p_n,q_n) =1$ and $(b,\lambda)=\cO$, there is at least one congruence class modulo $2\mu$ whose elements, call one $a'\in\bZ$, make $a'p_n\equiv -a\,\text{mod}\,2q_n$ and $(b,(a'+b\sqrt{\Delta})/2)=\cO$. Indeed, primes dividing $b$ but not $2q_n$ can be avoided by $(a'+b\sqrt{\Delta})/2$ with the Chinese remainder theorem. An odd prime dividing $b$ and $q_n$ cannot divide $|(a'+b\sqrt{\Delta})/2|^2$ lest it divide $a'$, thus $a$, thus $\lambda$, contradicting $(b,\lambda)=\cO$. And regarding $2$, if $b$ is even and $q_n$ is odd, choose $a'\,\text{mod}\,4$ so exactly one of $a'/2$ and $b/2$ is even. If $b$ and $q_n$ are both even then $p_n$ is odd, so exactly one of $a'/2$ and $b/2$ is even automatically because $(b,\lambda)=\cO$ implies the same is true of $a/2$ and $b/2$, and $a'\equiv a\,\text{mod}\,4$. 

If $|\Re(\mu\zeta-\lambda)|<\mu/4Jd$, then choose $a'$ with minimal magnitude from our congruence class $\text{mod}\,2\mu$. So $|a'|\leq \mu$. Otherwise, fix $a'$ nearest to \begin{equation}\frac{b\Re(\lambda-\mu\zeta)\sqrt{|\Delta|}}{\Im(\mu\zeta-\lambda)}\label{eq:14}\end{equation} without exceeding it in magnitude. Then, either from the bound on $\mu$ in (\ref{eq:11}) or the defining inequality of Case \hyperref[case:4]{4}, $|a'|<b\sqrt{|\Delta|}$. Thus with $\mu'=(a'+b\sqrt{|\Delta|})/2$, we have \begin{equation}|\mu'|<\frac{b\sqrt{|\Delta|}}{\sqrt{2}}<J\sqrt{|\Delta|}\leq d.\label{eq:15}\end{equation} Let $\lambda'\in\cO$ be nearest to $\mu'\zeta$.

The congruence restriction on $a'$ makes $\Im(\mu'\lambda/\mu)$, which is $(a+a'p_n)\sqrt{|\Delta|}/4q_n$, a multiple of $\sqrt{|\Delta|}/2$. This is therefore the imaginary part of $\lambda'$ since it minimizes $$|\Im(\mu'\zeta-\lambda')|=\frac{|\Im(\mu'(\mu\zeta-\lambda))|}{\mu}=\frac{|a'\Im(\mu\zeta-\lambda)+b\Re(\mu\zeta-\lambda)\sqrt{|\Delta|}|}{2\mu}.$$ If $|\Re(\mu\zeta-\lambda)|<\mu/4Jd$, then the triangle inequality, $|a'|\leq\mu$, and (\ref{eq:12}) bound the final expression above by $b\sqrt{|\Delta|}/4Jd$. Otherwise, the choice of $a'$ from (\ref{eq:14}) bounds it by $|\Im(\mu\zeta-\lambda)|$. So in both cases, \begin{equation}|\Im(\mu'\zeta-\lambda')|<\frac{b\sqrt{|\Delta|}}{4Jd}<\frac{\sqrt{|\Delta|}}{2\sqrt{2}d}\leq\frac{1}{2\sqrt{2}J}.\label{eq:16}\end{equation} 

We claim that prime divisors of $(\lambda,\mu,\mu')$ are ramified. Suppose $\fp$ is a split prime that divides $(\lambda,\mu)$. Then $\mu\in\bZ$ implies $\overline{\fp}\,|\,\mu$, and thus $\overline{\fp}\nmid\lambda$ because $(\lambda,\mu)$ has no nontrivial rational divisors by choice of $a$ and $b$ (and $\text{gcd}(p_n,q_n)=1$). So $\lambda\mu'-\overline{\lambda\mu'}=2\Im(\lambda\mu')\in(\mu)\subset \overline{\fp}$ shows that $\mu'\in\overline{\fp}$. But $(b,(a'+b\sqrt{\Delta})/2)=\cO$ implies $\mu'$ has no nontrivial rational divisors, giving $\mu'\not\in\fp$.

It is possible that we have just found a new approximation to $\zeta$ with which $\lambda/\mu$ can be combined as in previous cases. On the other hand, we may have just rediscovered $\lambda/\mu$.

\vspace{\baselineskip}

\phantomsection\label{case:4}
\noindent\textbf{Case 4:} $\lambda\mu'\neq\lambda'\mu$.\hspace{\parindent}Since $(\lambda,\mu,\lambda',\mu')$ has only ramified divisors, there is $k\in\bN$ that generates $(\lambda,\mu,\lambda',\mu')^2$. If $k\neq 1$ let $\kappa$ be either $\sqrt{\Delta}/2$ or $(k+\sqrt{\Delta})/2$, whichever belongs to $(\lambda,\mu,\lambda',\mu')$. If $k=1$ let $\kappa = 0$.  Then $$\left(\lambda,\mu,\frac{\kappa\lambda}{k}+\lambda',\frac{\kappa\mu}{k}+\mu'\right)=\cO.$$ To see this, first consider a prime $\fp$ dividing $(\lambda,\mu)$ but not $(\lambda',\mu')$. Our choice of $k$ gives $\fp\nmid (k)$. Thus $\fp\,|\,(\kappa\lambda/k,\kappa\mu/k)$, implying $\fp\nmid(\kappa\lambda/k+\lambda',\kappa\mu/k+\mu')$. Now suppose $\fp\,|\,(\lambda,\mu,\lambda',\mu')$. We have shown $\fp$ is ramified, so $\fp^2$ is generated by a rational prime that can divide neither $(\lambda,\mu)$ nor $(\kappa)$. In particular, $(\lambda,\mu)$ and $(\kappa)$ have order $1$ at $\fp$, while $(k)$ has order $2$ at $\fp$. So $\fp$ does not divide $(\kappa\lambda/k,\kappa\mu/k)$, and so cannot divide $(\kappa\lambda/k+\lambda',\kappa\mu/k+\mu')$. 

Now observe that $\lambda(\kappa\mu/k+\mu')-\mu(\kappa\lambda/k+\lambda')=\lambda\mu'-\mu\lambda'$, which is nonzero by Case \hyperref[case:4]{4}'s assumption. So Lemma \ref{lem:jacob} applies if 2 does not split. 

When $2$ splits, Lemma \ref{lem:jacob}'s hypothesis is still met provided $\mu=bq_n$ is even. Indeed, if $2\,|\,b$, recall that $(b,\mu')=\cO$ to get $(2,\kappa\mu/k+\mu')=\cO$. On the other hand, if $2\nmid b$ and $2\,|\,q_n$, then $p_n$, $a$, and $a'$ must all be odd. Hence $\lambda$ and $\mu'$ are each in one of the primes over $2$. Since $(\lambda,\mu,\mu')$ has no split divisors, $(2,\lambda,\mu')=\cO$, implying $(2,\lambda)$ and $(2,\mu')$ are conjugates of one another. Either $j=1$ or $j=0$ is such that exactly one of $\lambda'$ and $j+\kappa$ is in $(2,\mu')$. With this value of $j$ we see that $(2,j\lambda+(\kappa\lambda/k+\lambda'),j\mu+(\kappa\mu/k+\mu'))=\cO$.

If $2$ splits and $\mu$ is odd, it is possible Lemma \ref{lem:jacob}'s hypothesis is not met. In this case we replace $\mu'$ with $2\mu'$ and again let $\lambda'$ be nearest to $\mu'\zeta$. (So double $\lambda'$ and shift by $-1$, $0$, or $1$.) This doubles the upper bounds in (\ref{eq:15}) and (\ref{eq:16}). Remark that the new values also satisfy $\lambda\mu'\neq\lambda'\mu$ as $2$ divides one side of the non-equality but not the other if $\lambda'$ is replaced with $2\lambda'\pm 1$.

Regardless of whether doubling $\mu'$ is required, \begin{equation}|\mu'|<\frac{Jd}{3}\hspace{0.5cm}\text{and}\hspace{0.5cm}|\Im(\mu'\zeta-\lambda')|<\frac{1}{6\sqrt{2}}.\label{eq:17}\end{equation} This is because when $2$ splits and $\mu$ is odd, Lemma \ref{lem:low} doubles the minimum value of $J$ from $3$ to $6$, cancelling the doubling of (\ref{eq:15}) and (\ref{eq:16}). Note that the rational prime between $2$ and $|\delta|$ required in Lemma \ref{lem:low}'s hypothesis exists because we assume that such a non-inert prime divides $\|(\lambda,\mu)\|=\|(\lambda,q_n)\|$. Thus it divides $q_n$, which is odd and at most $\sqrt{|\Delta|}$.

Let us bound $|\lambda\mu'-\lambda'\mu|$ in view of applying Lemma \ref{lem:jacob}. Combining (\ref{eq:11}), (\ref{eq:13}), (\ref{eq:17}), and $|\Re(\mu'\zeta-\lambda')|\leq 1/2$, we have $$|\lambda\mu'-\lambda'\mu|\leq \mu|\mu'\zeta-\lambda'| + |\mu'(\mu\zeta-\lambda)|<$$ $$\sqrt{2}J\sqrt{|\Delta|}\sqrt{\frac{1}{4}+\frac{1}{72}}+\frac{Jd}{3}\frac{\sqrt{|\Delta|}}{2d}<2J\sqrt{|\Delta|}\leq 2d.$$ Therefore Lemma \ref{lem:jacob} gives $j$ in any interval of length $\cJ(|\lambda\mu'-\mu\lambda'|)\leq J$ such that $$\left(j\lambda+\Big(\frac{\kappa\lambda}{k}+\lambda'\Big),j\mu+\Big(\frac{\kappa\mu}{k}+\mu'\Big)\right)$$ has only principal prime divisors. Choose $j$'s interval to make $|j+\Re(\kappa)/k|\leq J/2$, which we note to be at most $d/2\sqrt{|\Delta|}$. We claim the associated hemisphere completely covers $S_{\zeta}$.

By definition of $k$, it is a product of distinct ramified (rational) primes, which thus divides $\Delta$. Therefore, $|\Im(\kappa)|/k=|\Delta|/2k\sqrt{|\Delta|}\leq |\delta|/2\sqrt{|\Delta|}\leq d/2\sqrt{|\Delta|}$. Alongside our bound on $|\Re(j+\kappa/k)|$, this gives $|j+\kappa/k|\leq d/\!\sqrt{2|\Delta|}$. The remaining expressions below already have bounds from (\ref{eq:11}), (\ref{eq:13}), and (\ref{eq:17}): $$\frac{|j+\kappa/k|\mu+|\mu'|}{1-|(j+\kappa/k)(\mu\zeta-\lambda)|-|\mu'\zeta-\lambda'|}.$$ This is positive and less than $14Jd$.\hfill$\diam$

\vspace{\baselineskip}

Now suppose Case \hyperref[case:4]{4} fails, so $\lambda\mu'=\mu\lambda'$. First, $(b,\lambda)=(b,\mu')=\cO$ forces $b = 1$ since otherwise $b$ divides $\mu\lambda'$ but not $\lambda\mu'$. Inequalities (\ref{eq:11}) and (\ref{eq:13}) become \begin{equation}\mu<\sqrt{|\Delta|}\hspace{0.5cm}\text{and}\hspace{0.5cm}|\mu\zeta-\lambda|<\frac{\sqrt{|\Delta|}}{2\sqrt{2}Jd}.\label{eq:18}\end{equation} And similarly for $\lambda'/\mu'$ from (\ref{eq:15}) and (\ref{eq:16}): \begin{equation}|\mu'|<\frac{\sqrt{|\Delta|}}{\sqrt{2}}\hspace{0.5cm}\text{and}\hspace{0.5cm}|\Im(\mu'\zeta-\lambda')|<\frac{\sqrt{|\Delta|}}{4Jd}.\label{eq:19}\end{equation}(And still $|\Re(\mu'\zeta-\lambda')|\leq 1/2$.) This also implies $\mu=\|(\lambda,\mu)\|$ since a rational divisor of $\mu/\|(\lambda,\mu)\|$ has to divide $\mu'=(a'+b\sqrt{\Delta})/2=(a'+\sqrt{\Delta})/2$ when $\lambda\mu'=\mu\lambda'$.

Pick coprime $\alpha,\beta\in\cO$ for which $\alpha\mu-\beta\lambda$ has minimal nonzero norm in $(\lambda,\mu)$. For $j,j'\in\bZ$, set $\lambda_{j,j'}=\alpha+j\lambda+j'\lambda'$ and $\mu_{j,j'}=\beta+j\mu+j'\mu'$. Observe that $\mu\lambda_{j,j'}-\lambda\mu_{j,j'}=\alpha\mu-\beta\lambda$.

Let $r$ denote the radius of $S_{\zeta}$. So $r\leq 1/14Jd$. Let $\xi\in\bC$ be any point for which $\xi+\lambda/\mu$ lies in the projection of $S_{\zeta}$. Then $t=\sqrt{r^2-|\zeta-(\xi+\lambda/\mu)|^2}$ makes $(\xi+\lambda/\mu,t)\in S_{\zeta}\subset\bH$. We will find $j,j'\in\bZ$ so that $(\lambda_{j,j'},\mu_{j,j'})$ is principal, $|\mu_{j,j'}|< 1/r$, and \begin{equation}t^2<\frac{1}{|\mu_{j,j'}|^2}-\left|\frac{\lambda_{j,j'}}{\mu_{j,j'}}-\left(\xi+\frac{\lambda}{\mu}\right)\right|^2.\label{eq:20}\end{equation} This will complete the proof since (\ref{eq:20}) is equivalent to $S_{\zeta}$ being covered by the hemisphere of $\lambda_{j,j'}/\mu_{j,j'}$ above the arbitrary point $\xi+\lambda/\mu$.

Let $D\subset\bC$ be the open disc that has $0$ on its boundary, radius $$R=\min\!\left(\frac{1}{2r},\,\frac{|\alpha\mu-\beta\lambda|}{2\mu|\xi|}\right),$$ and center with argument matching that of $\overline{\xi}(\alpha\mu-\beta\lambda)$ (or any argument if $\xi=0$). First we show that any $j$ and $j'$ which make $(\lambda_{j,j'},\mu_{j,j'})$ principal and $\mu_{j,j'}\in D$ suffice. Then we show that such $j$ and $j'$ exist.

If $\mu_{j,j'}\in D$ then $|\mu_{j,j'}|<2R$ because $0$ lies on the boundary of $D$. In particular, $|\mu_{j,j'}|<1/r$. So only (\ref{eq:20}), which rearranges to \begin{equation}\left|\frac{\alpha\mu-\beta\lambda}{\mu}-\mu_{j,j'}\xi\right|^2< 1-t^2|\mu_{j,j'}|^2,\label{eq:21}\end{equation} needs to be checked. 

By definition of $r$, $|\mu\zeta-\lambda|/r$ is the magnitude of an element of $(\lambda,\mu)$. So the choice of $\alpha$ and $\beta$ makes $|\alpha\mu-\beta\lambda|\leq|\mu\zeta-\lambda|/r$. If $\xi=0$, this observation combines with $|\mu_{j,j'}|< 1/r$ and $t^2 = r^2-|\zeta-\lambda/\mu|^2$ to give (\ref{eq:21}). 

So suppose $\xi\neq 0$. View $\mu_{j,j'}$ and the center of $D$ as vectors in the plane extending from the origin. The argument of the center of $D$ is equal to that of $\overline{\xi}(\alpha\mu-\beta\lambda)$, so the angle between our two vectors is defined by $$\cos\theta=\frac{\Re\big(\overline{\mu_{j,j'}\xi}(\alpha\mu-\beta\lambda)\big)}{\big|\mu_{j,j'}\xi(\alpha\mu-\beta\lambda)\big|}.$$ Since $0$ is on the boundary of $D$, $\mu_{j,j'}\in D$ is equivalent to $2R\cos\theta>|\mu_{j,j'}|$, or, using the expression for $\cos\theta$ above, $$2R\,\Re\big(\overline{\mu_{j,j'}\xi}(\alpha\mu-\beta\lambda)\big)> |\mu_{j,j'}^2\xi(\alpha\mu-\beta\lambda)|.$$ This, in turn, is equivalent to \begin{equation}\left|\frac{\alpha\mu-\beta\lambda}{\mu}-\mu_{j,j'}\xi\right|^2<\frac{|\alpha\mu-\beta\lambda|^2}{\mu^2}-\left(\frac{|\alpha\mu-\beta\lambda|}{R\mu|\xi|}-1\right)|\mu_{j,j'}\xi|^2,\label{eq:22}\end{equation} which is our first step toward proving (\ref{eq:21}) when $\xi\neq 0$. 

If $R=|\alpha\mu-\beta\lambda|/2\mu|\xi|$, the right-hand side of (\ref{eq:22}) simplifies as shown in the first expression below. Then the first inequality uses $|\alpha\mu-\beta\lambda|\leq \mu$, and the second inequality uses $t< r\leq \max(r,\mu|\xi|/|\alpha\mu-\beta\lambda|)=1/2R = \mu|\xi|/|\alpha\mu-\beta\lambda|$: $$\frac{|\alpha\mu-\beta\lambda|^2}{\mu^2}-|\mu_{j,j'}\xi|^2\leq 1-\left|\frac{\mu\mu_{j,j'}\xi}{\alpha\mu-\beta\lambda}\right|^2< 1-t^2|\mu_{j,j'}|^2.$$ Connecting with (\ref{eq:22}) completes a chain of inequalities that gives (\ref{eq:21}). 

If $R=1/2r$ then $|\xi|\leq r|\alpha\mu-\beta\lambda|/\mu$ by definition of $R$. Furthermore, we have seen that $r|\alpha\mu-\beta\lambda|/\mu\leq |\zeta-\lambda/\mu|$ by choice of $\alpha$ and $\beta$. From these we get the last inequality below: $$t^2=r^2-\left|\zeta-\left(\xi+\frac{\lambda}{\mu}\right)\right|^{\,2}\leq r^2-\left(\left|\zeta-\frac{\lambda}{\mu}\right|-|\xi|\right)^{\!2}\leq r^2-\left(\frac{r|\alpha\mu-\beta\lambda|}{\mu}-|\xi|\right)^{\!2}.$$ Remember that we aim to bound the right-hand side of (\ref{eq:22}) by $1-t^2|\mu_{j,j'}|^2$ (thereby proving (\ref{eq:21})). Replace $t^2$ with its bound above, and the desired inequality becomes $$\frac{|\alpha\mu-\beta\lambda|^2}{\mu^2}-\left(\frac{2r|\alpha\mu-\beta\lambda|}{\mu|\xi|}-1\right)|\mu_{j,j'}\xi|^2\leq$$ $$1-\left(r^2-\left(\frac{r|\alpha\mu-\beta\lambda|}{\mu}-|\xi|\right)^{\!2}\right)|\mu_{j,j'}|^2.$$ This rearranges to $$(1-r^2|\mu_{j,j'}|^2)\left(1-\frac{|\alpha\mu-\beta\lambda|^2}{\mu^2}\right)\geq 0,$$ which is true.

Now we turn to the existence of such a $j$ and $j'$.

\vspace{\baselineskip}
\phantomsection\label{case:5}
\noindent\textbf{Case 5:} $|\zeta-\lambda/\mu|\leq1/2\sqrt{2}Jd$.\hspace{\parindent}Since $(\alpha,\beta)=\cO$, Lemma \ref{lem:jacob} gives $j'$ in any interval of length $\cJ(2\sqrt{\|(\lambda,\mu)\|})=\cJ(2\sqrt{\mu})\leq J$ such that $(\alpha+j'\lambda',\beta+j'\mu')$ and $2(\lambda,\mu)$ share only principal prime divisors. Take such a $j'$ that makes the distance between $\Im(\mu_{j,j'})$ (which does not depend on $j$) and the imaginary part of the center of $D$ at most $J|\Im(\mu')|/2 = J\sqrt{|\Delta|}/4$. 

Then Lemma \ref{lem:jacob} applies again: any interval of length $\cJ(|\alpha\mu-\beta\lambda|)\leq\cJ(|\mu|)\leq J$ contains $j\in\bZ$ making $(\lambda_{j,j'},\mu_{j,j'})$ principal. Take such a $j$ that makes the distance between $\Re(\mu_{j,j'})$ and the real part of the center of $D$ at most $J\mu/2 < J\sqrt{|\Delta|}/2$.

Let us check that $\mu_{j,j'}\in D$. If $\alpha\mu-\beta\lambda$ is not $\pm\mu$, then it is not rational. In particular, $$\frac{2\mu|\xi|}{|\alpha\mu-\beta\lambda|}\leq \frac{4\mu|\xi|}{\sqrt{|\Delta|}}<4|\xi|\leq 4\left(\left|\zeta-\frac{\lambda}{\mu}\right|+r\right)<4\left(\frac{1}{2\sqrt{2}Jd}+\frac{1}{14Jd}\right)<\frac{2}{Jd}.$$ Since $2\mu|\xi|/|\alpha\mu-\beta\lambda|<4|\xi|$ (the first two inequalities above) also holds when $\alpha\mu-\beta\lambda=\pm\mu$, either way we get $R=\min(1/2r,|\alpha\mu-\beta\lambda|/2\mu|\xi|)> Jd/2.$ But the distance from $\mu_{j,j'}$ to the center of $D$ is at most $$\frac{J\sqrt{\mu^2+\Im(\mu')^2}}{2}<\frac{J\sqrt{5|\Delta|}}{4}<\frac{J^2\sqrt{|\Delta|}}{2}\leq \frac{Jd}{2},$$ giving $\mu_{j,j'}\in D$.\hfill$\diam$

\vspace{\baselineskip}

Suppose Case \hyperref[case:5]{5} fails, so $|\zeta-\lambda/\mu|>1/2\sqrt{2}Jd$. Then $|\Re(\mu\zeta-\lambda)|<|\Im(\mu\zeta-\lambda)|$, because otherwise $|\Re(\mu\zeta-\lambda)|\geq|\mu\zeta-\lambda|/\sqrt{2}>\mu/4Jd$ violates the failure of Case \hyperref[case:3]{3}. In other words, it is the argument of $\mu\zeta-\lambda$ that prohibits Case \hyperref[case:3]{3}. We claim if $(\lambda,\mu,\mu')\neq \cO$ or $(\lambda,\mu)$ is not reduced, then $\mu\zeta-\lambda$ can be rotated (and perhaps scaled slightly in the process) to allow for Case \hyperref[case:3]{3}.

First suppose $(\lambda,\mu,\mu')$ has at least one prime divisor, which we know to be ramified. So $(\lambda,\mu,\mu')^2$ is generated by some rational integer $k>1$. If $\kappa=\sqrt{|\Delta|}/2$ belongs to $(\lambda,\mu,\mu')$, then $(\kappa\lambda/k,\kappa\mu/k)\subset\cO$ and $|\Re(\kappa(\mu\zeta-\lambda))|\geq|\Im(\kappa(\mu\zeta-\lambda))|$ since $\text{arg}(\kappa/k)=\pi/2$. Otherwise $\kappa=(k+\sqrt{|\Delta|})/2$ belongs to $(\lambda,\mu,\mu')$, as does $\overline{\kappa}$. It follows from $0<k<\mu<\sqrt{|\Delta|}$ that $\pi/4<\text{arg}\,\kappa<\pi/2<\text{arg}\,\overline{\kappa}<3\pi/4$. So by conjugating $\kappa$ if necessary, we may assume $|\Re(\kappa(\mu\zeta-\lambda)/k)|\geq|\Im(\kappa(\mu\zeta-\lambda)/k)|$. 

Let us check that $\kappa\lambda/k$ and $\kappa\mu/k$ qualify for Case \hyperref[case:3]{3}. Using $$\frac{|\kappa|}{k}\leq\frac{\sqrt{|\Delta|}}{\sqrt{2}k}\leq\frac{|\delta|}{\sqrt{2|\Delta|}}\leq\frac{d}{\sqrt{2|\Delta|}}$$ and (\ref{eq:18}) gives $|\kappa\mu/k|< d/\sqrt{2}$. So $\cJ(|\kappa\mu/k|)\leq J$ as Case \hyperref[case:3]{3} requires. Also, $$\left|\Re\left(\frac{\kappa\mu}{k}\zeta-\frac{\kappa\lambda}{k}\right)\right|\geq\frac{1}{\sqrt{2}}\left|\frac{\kappa\mu}{k}\zeta-\frac{\kappa\lambda}{k}\right|=\frac{|\kappa\mu|}{k\sqrt{2}}\left|\zeta-\frac{\lambda}{\mu}\right|>\frac{|\kappa\mu/k|}{4Jd}$$ from the failure of Case \hyperref[case:5]{5}, and $$\left|\Re\left(\frac{\kappa\mu}{k}\zeta-\frac{\kappa\lambda}{k}\right)\right|\leq\frac{d|\mu\zeta-\lambda|}{\sqrt{2|\Delta|}}\leq\frac{1}{4J}<\frac{1}{\sqrt{2}J}$$ from (\ref{eq:18}). This verifies all of Case \hyperref[case:3]{3}'s hypotheses.

Now suppose $(\lambda,\mu)$ is not reduced. Rephrase $|\Re(\mu\zeta-\lambda)|<|\Im(\mu\zeta-\lambda)|$ as $\text{arg}(\mu\zeta-\lambda)\,\text{mod}\,\pi\in(\pi/4,3\pi/4)$. If needed, add a multiple of $\mu$ to $\mu'$ so $\Re(\mu')\leq\mu/2=\|(\lambda,\mu)\|/2$. Then $(\mu'/\mu)(\lambda,\mu)=(\lambda',\mu')$ has minimal norm in the ideal class of $(\lambda,\mu)$, and in particular $|\mu'|<\mu$. Without loss of generality suppose $0\leq a'=2\Re(\mu')$. Since $a'\leq\mu<\sqrt{|\Delta|}/2$ by (\ref{eq:18}), the argument of $\mu'/\mu$ is $\arctan(\sqrt{|\Delta|}/a')\in(\pi/4,\pi/2]$. Therefore $|\Re(\mu'\zeta-\lambda')|\geq|\Im(\mu'\zeta-\lambda')|$
if $$\text{arg}(\mu\zeta-\lambda)\,\text{mod}\,\pi\geq\frac{3\pi}{4}-\arctan\left(\frac{\sqrt{|\Delta|}}{a'}\right).$$ If this does not account for our value of $\mu\zeta-\lambda$, replace $\lambda'$ and $\mu'$ with $\lambda'-\lambda$ and $\mu'-\mu$. The argument of $\mu'/\mu$ is now $\pi-\arctan(\sqrt{|\Delta|}/(2\mu-a'))\in(\pi/2,\pi)$, so $|\Re(\mu'\zeta-\lambda')|\geq|\Im(\mu'\zeta-\lambda')|$ whenever $$\text{arg}(\mu\zeta-\lambda)\,\text{mod}\,\pi\leq\frac{\pi}{4}+\arctan\left(\frac{\sqrt{|\Delta|}}{2\mu-a'}\right).$$ This covers all possible arguments of $\mu\zeta-\lambda$ because $$\frac{3\pi}{4}-\arctan\left(\frac{\sqrt{|\Delta|}}{a'}\right)\leq\frac{\pi}{4}+\arctan\left(\frac{\sqrt{|\Delta|}}{2\mu-a'}\right)$$ is equivalent to $a'/\sqrt{|\Delta|}\leq \sqrt{|\Delta|}/(2\mu-a')$, or $a'(2\mu-a')\leq |\Delta|$. And $a'(2\mu-a')$ is maximized when $a'=\mu$, which is less than $\sqrt{|\Delta|}$. 

Let us check that $\lambda'$ and $\mu'$ qualify for Case \hyperref[case:3]{3}. In replacing $\mu'$ with $\mu'-\mu$, it may now be that $|\mu'|>\mu$. But $|\Re(\mu')|=\mu-a'/2\leq\mu$ means $|\mu'|/\mu\leq\sqrt{1+|\Delta|/4\mu^2}$, which has a maximum of $\sqrt{2}$ for $\mu\geq\sqrt{|\Delta|}/2$. So $|\mu'|<\sqrt{2|\Delta|}$, which means $\cJ(|\mu'|)\leq J$. Furthermore, $$|\Re(\mu'\zeta-\lambda')|\geq\frac{|\mu'\zeta-\lambda'|}{\sqrt{2}}=\frac{|\mu'|}{\sqrt{2}}\left|\zeta-\frac{\lambda}{\mu}\right|>\frac{|\mu'|}{4Jd}$$ from the failure of Case \hyperref[case:5]{5}, and $$|\Re(\mu'\zeta-\lambda')|\leq |\mu'\zeta-\lambda'|\leq\sqrt{2}|\mu\zeta-\lambda|<\frac{\sqrt{|\Delta|}}{2Jd} < \frac{1}{\sqrt{2}J}$$ from $|\mu'|\leq\sqrt{2}\mu$ and (\ref{eq:18}). Thus Case \hyperref[case:3]{3} applies again.

We may therefore assume $(\lambda,\mu)$ is reduced and $(\lambda,\mu,\mu')=\cO$. Assuming $(\lambda,\mu)$ is reduced allows us to use $\alpha=1$ and $\beta=0$ in defining $\lambda_{j,j'}$ and $\mu_{j,j'}$. Assuming $(\lambda,\mu,\mu')=\cO$ gives $(\lambda,\mu,1+j'\lambda',j'\mu')=\cO$ for any $j'$. Also, with the aim of applying Lemma \ref{lem:jacob} to find $j$, observe when $2$ splits that if $(2,1+j'\lambda',j'\mu')\neq\cO$ then $(1+j'\lambda')\mu-(j'\mu')\lambda=\mu$ must be even. Thus $(2,1+\lambda+j'\lambda',\mu+j'\mu')=\cO$ because $\lambda$ and $\mu'$ are in different primes over $2$. So regardless of the choice of $j'$, Lemma \ref{lem:jacob} allows us to choose $j$ in any interval of length $J$ such that $(\lambda_{j,j'},\mu_{j,j'})$ has only split prime divisors. We therefore choose whichever $j'$ makes $\Im(\mu_{j,j'})$ closest to (so within $|\Im(\mu')/2|=\sqrt{|\Delta|}/4$ of) the imaginary part of the center of $D$.

Now, use $|\mu'|\geq|\Im(\mu')|=\sqrt{|\Delta|}/2$ to get the second inequality below, and use (\ref{eq:19}) and $|\Re(\mu'\zeta-\lambda')|\leq 1/2$ for the last inequality: $$\frac{2\mu|\xi|}{|\alpha\mu-\beta\lambda|}=2|\xi|\leq 2\left|\zeta-\frac{\lambda'}{\mu'}\right|+2r\leq \frac{4|\mu'\zeta-\lambda'|}{\sqrt{|\Delta|}}+2r<\frac{3}{\sqrt{|\Delta|}}.$$ Therefore $R> \sqrt{|\Delta|}/3$. Since $\Im(\mu_{j,j'})$ is within $\sqrt{|\Delta|}/4$ of the imaginary part of the center of $D$, the line with fixed imaginary part $\Im(\mu_{j,j'})$ intersects $D$ to make an interval longer than $2R\sqrt{1-(3/4)^2}$. Thus the number of consecutive $j$-values for which $\mu_{j,j'}\in D$ is at least $$\left\lfloor\frac{2R\sqrt{1-(3/4)^2}}{\mu}\right\rfloor\geq\left\lfloor\frac{\sqrt{7}/4}{|\mu\zeta-\lambda| + \mu r}\right\rfloor\geq \left\lfloor \frac{\sqrt{7}J^2/4}{1/2\sqrt{2}+1/14}\right\rfloor> J.$$ Thus there exists $\mu_{j,j'}\in D$ making $(\lambda_{j,j'},\mu_{j,j'})$ principal.\end{proof}

\begin{figure}[ht]
    \hspace{0.185cm}\begin{overpic}[trim=0cm 7.3cm 0cm 7.6cm, clip, scale=0.57,unit=1mm]{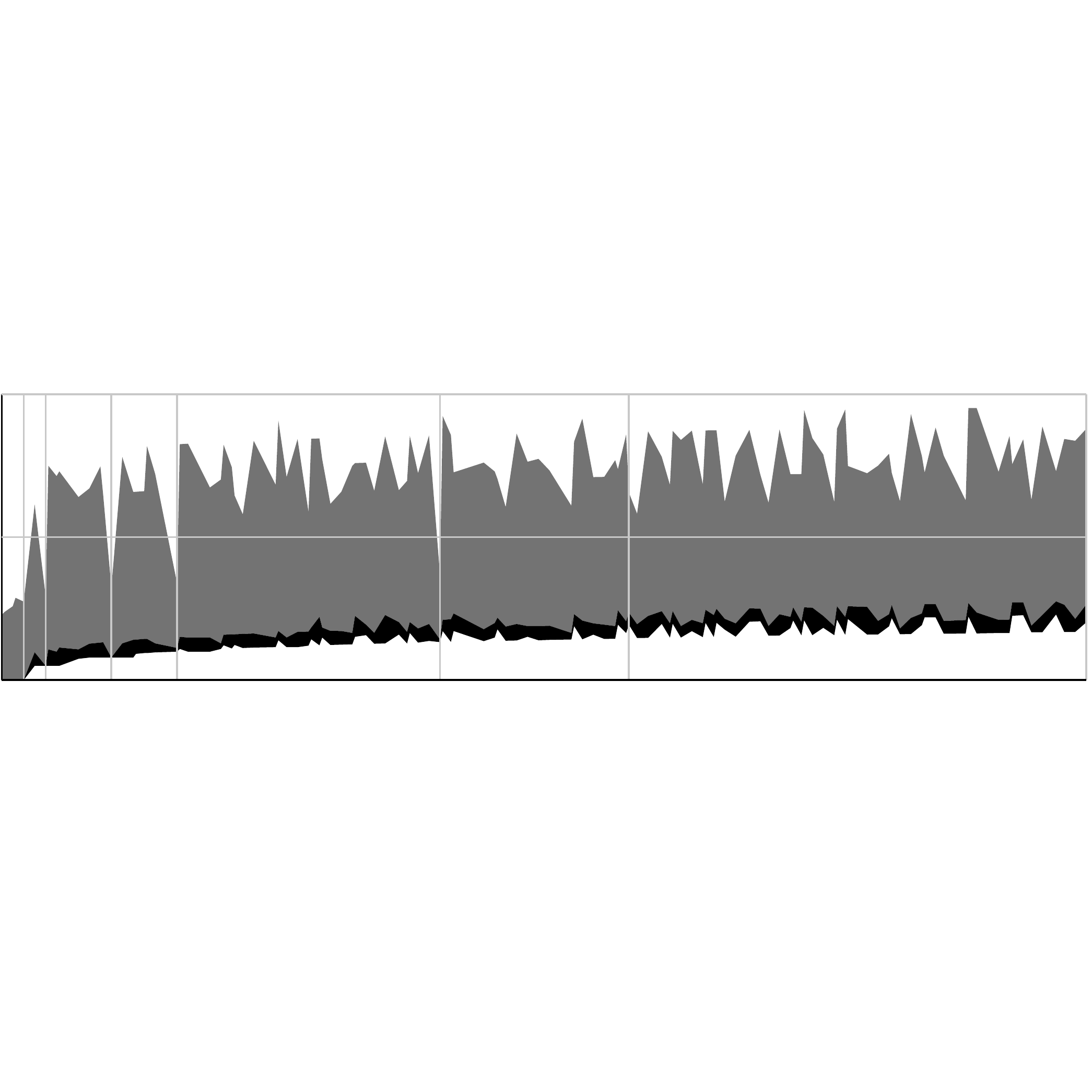}
    \put(-2.5,3.1){$0$}
    \put(-2.5,18.9){$7$}
    \put(-3.9,34.4){$14$}
    \put(0.4,0.8){$1\hspace{-0.03cm}1$}
    \put(4.13,0.8){$19$}
    \put(10.5,0.8){$43$}
    \put(18.2,0.8){$67$}
    \put(46.1,0.8){$163$}
    \put(67.15,0.8){$232$}
    \put(117.5,0.8){$399$}
    \end{overpic}\captionsetup{width=0.931\textwidth}\caption{Logarithms of the upper and lower bound and $\cS(\Gamma)$ for $|\Delta|<400$.}\label{fig:6}
\end{figure}

Figure \ref{fig:6} compares the logarithms of our lower bound and upper bound as summarized in Theorem \ref{thm:intro} to $\log(\cS(\Gamma))$, which is the jagged line splitting the gray overapproximation area and the black underapproximation area. The lower bound is rounded up to the nearest integer magnitude. This creates little observable effect except to make its logarithm in the five Euclidean cases ($|\Delta|\leq 11$) nonnegative. Values of $\cS(\Gamma)$ were initially computed using Rahm's Bianchi.gp \cite{bianchigp}, then verified with John Cremona's implementation of Swan's algorithm \cite{cremona}, which also filled in gaps ($\Delta=-119,-183,-267$) due to insufficient RAM for Bianchi.gp.

For $|\Delta|<400$, Swan's number is at most four times our lower bound. Our upper bound appears overly-sensitive to $J$ as defined in Theorem \ref{thm:up}. Indeed, $J$ accounts for all of the fluctuation in the top line except at $|\Delta|=8$ and $232$, the only two cases where $14J\max(J\sqrt{|\Delta|},|\delta|)= 14J|\delta|$. The large dips in the top line that are marked on the graph occur at principal ideal domains since $J=1$ in these rings: $|\Delta|\leq 11$ and $|\Delta|=19,43,67,163$.

\begin{corollary}\label{cor:asymptotic}$\cS(\Gamma)\ll J'\max(|\delta|,J'\!\sqrt{|\Delta|})$, where $J'=(\log |\Delta|/\log\log|\Delta|)^{ 4.27}$.\end{corollary}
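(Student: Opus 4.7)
The plan is to deduce the corollary directly from Theorem~\ref{thm:up} by exhibiting an admissible integer $J$ of size $\asymp J'$. Theorem~\ref{thm:up} gives $\cS(\Gamma)<14J\max(|\delta|,J\sqrt{|\Delta|})$ provided the self-referential inequality $\cJ(2\max(|\delta|,J\sqrt{|\Delta|}))\leq J$ holds, so once such a $J$ of the stated size is produced, the factor $14$ is absorbed into the implied constant of $\ll$ and the bound reads $\cS(\Gamma)\ll J'\max(|\delta|,J'\sqrt{|\Delta|})$.

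First I would convert Proposition~\ref{prop:asymptotic} into a bound on $\cJ$. By Notation~\ref{not:J}, any $\fa$ realising the maximum in $\cJ(x)$ satisfies $\|\fa\|<x^2$, so the number of distinct rational primes dividing $\|\fa\|$ is bounded by the classical estimate $\omega(n)\ll \log n/\log\log n$ applied at $n<x^2$; this gives $\omega(\fa)\ll \log x/\log\log x$. Plugging into Proposition~\ref{prop:asymptotic} yields $\cJ(x)\ll (\log x/\log\log x)^{4.27}$, with implied constant independent of $\cO$.

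Next I would verify the hypothesis of Theorem~\ref{thm:up} with $J=\lceil C J'\rceil$ for a sufficiently large absolute constant $C$. Since $|\delta|\leq|\Delta|$ and $J\sqrt{|\Delta|}\ll(\log|\Delta|)^{4.27}\sqrt{|\Delta|}$, the argument $2\max(|\delta|,J\sqrt{|\Delta|})$ is at most $|\Delta|^{1+o(1)}$, so its logarithm is $(1+o(1))\log|\Delta|$. The bound from the previous step then gives $\cJ(2\max(|\delta|,J\sqrt{|\Delta|}))\ll(\log|\Delta|/\log\log|\Delta|)^{4.27}=J'$, and choosing $C$ larger than the implied constant forces $\cJ(2\max(|\delta|,J\sqrt{|\Delta|}))\leq CJ'\leq J$. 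Theorem~\ref{thm:up} then delivers $\cS(\Gamma)<14J\max(|\delta|,J\sqrt{|\Delta|})\ll J'\max(|\delta|,J'\sqrt{|\Delta|})$, with the finitely many small $|\Delta|$ swept into the implied constant.

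The main obstacle is conceptually mild but worth a line of care: because $J$ appears both as the bound being sought and inside the argument of $\cJ$, one must check that feeding back the polylogarithmic $J'$ into $2\max(|\delta|,J'\sqrt{|\Delta|})$ only affects the logarithm to leading order, so that the asymptotic $(\log x/\log\log x)^{4.27}$ remains $\sim J'$ and the self-referential condition closes up to a multiplicative constant.
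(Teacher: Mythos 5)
Your proposal is correct and follows essentially the same route as the paper: bound $\omega(\fa)\ll\log\|\fa\|/\log\log\|\fa\|$ (the paper cites Robin's explicit constant $1.4$), feed this into Proposition~\ref{prop:asymptotic} to get $\cJ(x)\ll(\log x/\log\log x)^{4.27}$, then choose $J\asymp J'$ and close the self-referential hypothesis of Theorem~\ref{thm:up} by observing that the argument $2\max(|\delta|,J\sqrt{|\Delta|})$ is at most about $|\Delta|$, so its logarithm is $(1+o(1))\log|\Delta|$. Your extra care in taking $J=\lceil CJ'\rceil$ to make $J$ an integer is a minor tidying of the paper's argument, not a different approach.
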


\begin{proof}Using Proposition \ref{prop:asymptotic}'s notation, Robin proves $\omega(\fa)<1.4\log\|\fa\|/\log\log\|\fa\|$ when $\|\fa\|\geq 3$ \cite{robin}. This gives $\cJ(x)<c\hspace{0.03cm}(\log x/\log\log x)^{4.27}$ for some constant $c$. So let $J=c\hspace{0.03cm}(\log |\Delta|/\log\log|\Delta|)^{ 4.27}$. Then if $|\Delta|$ is sufficiently large we have $\cJ(2\max(|\delta|,J\sqrt{|\Delta|}))\leq \cJ(|\Delta|) < J$ as required by Theorem \ref{thm:up}.\end{proof}

The proof of Theorem \ref{thm:up} simplifies when applied to the \textit{extended Bianchi group}, which we denote $\widehat{\Gamma}$. (See Section 7.4 of \cite{elstrodt} for a description and basic properties.) The Bianchi group, $\Gamma$, is a normal subgroup of $\widehat{\Gamma}$ with index equal to the twice the number of $2$-torsion ideal classes when $\Delta\neq -3,-4$. In particular, if a presentation for $\widehat{\Gamma}$ is computed using Poincar\'{e}'s polyhedron theorem, then a presentation for $\Gamma$ follows from the Nielsen-Schreier algorithm \cite{nielsen}. 

When bounding Swan's number for $\widehat{\Gamma}$ (still meaning maximum curvature in the Bianchi polyhedron), ramified primes are no longer an obstacle. Up to scaling, $\lambda$ and $\mu$ compose a column or row in $\widehat{\Gamma}$ if and only if primes dividing $(\lambda,\mu)$ are ramified. So there is no need to consider ``$\delta$"---our upper bound takes the form $\cS(\widehat{\Gamma})<cJ^2\sqrt{|\Delta|}$. There is also no need to sieve all non-principal primes with $\cJ$ in Notation \ref{not:J}. Only split primes must be avoided. 

The bound on $\cS(\widehat{\Gamma})$ can be further improved (and the proof further simplified) with the use of \textit{complex} continued fractions in Cases \hyperref[case:1]{1} and \hyperref[case:2]{2}. The author generalized continued fractions to all imaginary quadratic fields in \cite{martin}. The convergents $\lambda_n/\mu_n\in K$ are such that $(\lambda_n,\mu_n)$ and $(\lambda_{n+1},\mu_{n+1})$ may not be coprime when $|\Delta| > 11$ (non-Euclidean rings). Thus $j\lambda_n+\lambda_{n+1}$ and $j\mu_n+\mu_{n+1}$ may not compose a column or row in $\Gamma$ for any $j$, hence the necessity for floor function continued fractions and Lemma \ref{lem:thue} in the proof of Theorem \ref{thm:up}. But $(\lambda_n,\mu_n,\lambda_{n+1},\mu_{n+1})$ is purely ramified, so such linear combinations will work for $\widehat{\Gamma}$. This eliminates the need for Lemma \ref{lem:thue}, in turn eliminating the implicit definition of $J$, giving instead something like $J=\cJ(c'\sqrt{|\Delta|})$ (where, again, $\cJ$ sieves only split primes). Nevertheless, we study only $\Gamma$ here as $\cS(\Gamma)$ is more relevant to literature than $\cS(\widehat{\Gamma})$.

\bibliographystyle{plain}
\bibliography{refs}

\end{document}